\begin{document}

\newtheorem{lem}{Lemma}
\newtheorem{thm}{Theorem}
\newtheorem{cor}{Corollary}
\newtheorem{exa}{Example}
\newtheorem{con}{Conjecture}
\newtheorem{rem}{Remark}
\newtheorem{obs}{Observation}
\newtheorem{definition}{Definition}
\newtheorem{pro}{Proposition}
\theoremstyle{plain}
\newcommand{\D}{\displaystyle}
\newcommand{\DF}[2]{\D\frac{#1}{#2}}

\renewcommand{\figurename}{{\bf Fig}}
\captionsetup{labelfont=bf}

\title{\bf Conflict-free connection numbers\\[3mm] of line graphs\footnote{Supported by NSFC No.11371205 and 11531011, and CPSF No. 2016M601247.}}

\author{{\small Bo Deng$^{1,2}$, \ \  Wenjing Li$^1$, \ \
Xueliang Li$^{1,2}$, \ \ Yaping Mao$^2$,  \ \ Haixing Zhao$^2$}\\
\\{\small $^1$Center for Combinatorics and LPMC,}\\
 {\small Nankai University, Tianjin 300071, China.}\\
      {\small $^2$School of Mathematics and Statistics,}\\
      {\small Qinghai Normal University,}
         \\{\small  Xining, Qinghai, 810008, China.}\\
      {\small Emall: dengbo450@163.com;}\\
        {\small liwenjing610@mail.nankai.edu.cn;}\\
         {\small lxl@nankai.edu.cn;}\
           {\small maoyaping@ymail.com;}\\
         {\small h.x.zhao@163.com.}
         }

\date{}

\maketitle
\begin{abstract}
A path in an edge-colored graph is called \emph{conflict-free}
if it contains at least one color used on exactly one of its edges.
An edge-colored graph $G$ is \emph{conflict-free connected}
if for any two distinct vertices of $G$, there is a conflict-free path connecting them.
For a connected graph $G$, the \emph{conflict-free connection number} of $G$, denoted by $cfc(G)$,
is defined as the minimum number of colors that are required to make $G$
conflict-free connected.
In this paper, we investigate the conflict-free connection numbers of
connected claw-free graphs, especially line graphs.
We first show that for an arbitrary connected graph $G$, there exists a positive integer $k$ such that $cfc(L^k(G))\leq 2$.
Secondly, we get the exact value of the conflict-free connection number of a connected claw-free graph, especially a connected line graph.
Thirdly, we prove that for an arbitrary connected graph $G$
and an arbitrary positive integer $k$, we always have $cfc(L^{k+1}(G))\leq cfc(L^k(G))$,
with only the exception that $G$ is isomorphic to a star of order at least~$5$ and $k=1$.
Finally, we obtain the exact values
of $cfc(L^k(G))$, and use them as an efficient tool to get the smallest
nonnegative integer $k_0$ such that $cfc(L^{k_0}(G))=2$.\\[3mm]

\noindent{\bf Keywords:} conflict-free connection number, claw-free graphs, line graphs, $k$-iterated line graphs.\\[3mm]

\noindent{\bf AMS Subject Classification 2010:} 05C15, 05C35, 05C38, 05C40.
\end{abstract}

\section{Introduction}

All graphs considered in this paper are simple, finite, and undirected.
We follow the terminology and notation of Bondy and Murty in~\cite{Bondy} for those not defined here.
For a connected graph $G$, let $V(G),E(G),\kappa(G)$ and $\lambda(G)$ denote the vertex set, the edge set, the vertex-connectivity and the edge-connectivity of $G$, respectively.
Throughout this paper, we use $P_n$, $C_n$ and $K_n$ to denote a path, a cycle and a complete graph of order $n$, respectively.
And we call $G$ a \emph{star} of order $r+1$ if $G\cong K_{1,r}$.

Let $G$ be a nontrivial connected graph with an \emph{edge-coloring c}
$:E(G)\rightarrow \{0,1,\dots,t\}$, $t\in \mathbb{N}$,
where adjacent edges may be colored with the same color.
When adjacent edges of $G$ receive different colors by $c$, the edge-coloring $c$ is called \emph{proper}.
The \emph{chromatic index} of $G$, denoted by $\chi'(G)$, is
the minimum number of colors needed in a proper coloring of $G$.
A path in $G$ is called \emph{a rainbow path} if no two edges
of the path are colored with the same color. The graph $G$ is called \emph{rainbow connected}
if for any two distinct vertices of $G$,
there is a rainbow path connecting them.
For a connected graph $G$, the \emph{rainbow connection number} of $G$, denoted by $rc(G)$, is defined as the
minimum number of colors that are needed to make $G$ rainbow connected.
These concepts
were first introduced by Chartrand et al. in~\cite{Char1} and have been well-studied since then.
For further details, we refer the reader to a book~\cite{Li} and a survey paper \cite{LSS}.

Motivated by the rainbow connection coloring and proper coloring in graphs, Andrews et al.~\cite{Andrews} and Borozan et al.~\cite{Magnant}
proposed the concept of proper-path coloring.
Let $G$ be a nontrivial connected graph with an edge-coloring.
A path in $G$ is called \emph{a proper path}
if no two adjacent edges of the path are colored with the same color.
The graph $G$ is called \emph{proper connected}
if for any two distinct vertices of $G$,
there is a proper path connecting them.
The \emph{proper connection number} of $G$,
denoted by $pc(G)$, is defined as the minimum number of colors
that are needed to make $G$ proper connected.
For more details, we refer to a dynamic survey~\cite{Li1}.

Inspired by the above mentioned two connection colorings and
conflict-free colorings of graphs and hypergraphs~\cite{Pach},
Czap et al.~\cite{Czap} recently introduced the concept of
the conflict-free connection number of a nontrivial connected graph.
Let $G$ be a nontrivial connected graph with an edge-coloring $c$.
A path in $G$ is called \emph{conflict-free}
if it contains at least one color used on exactly one of its edges.
The graph $G$ is \emph{conflict-free connected (with respect to the edge-coloring $c$)}
if for any two distinct vertices of $G$, there is a conflict-free path connecting them.
In this case, the edge-coloring $c$ is called a \emph{conflict-free connection coloring} (\emph{$CFC$-coloring} for short).
For a connected graph $G$, the \emph{conflict-free connection number}
of $G$, denoted by $cfc(G)$,
is defined as the minimum number of colors that are required to make $G$
conflict-free connected. For the graph with a single vertex or without any vertex, we assume the value of its conflict-free connection number equal to $0$.
The following observations are immediate.

\begin{pro}\label{pro1} Let $G$ be a connected graph on $n\geq 2$ vertices. Then we have

\ \ $(i)$ $cfc(G)=1$ if and only if $G$ is complete;

\ $(ii)$ $cfc(G)\geq 2$ if $G$ is noncomplete;

$(iii)$ $cfc(G)\leq n-1$.
\end{pro}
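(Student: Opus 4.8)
The plan is to handle the three parts in the order $(i)$, then derive $(ii)$ as a quick consequence, and finally establish the upper bound $(iii)$ via a spanning tree. The common thread is a single elementary observation about how colors repeat along a path.

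For part $(i)$, I would first note that when only one color is available, a path is conflict-free precisely when that color appears exactly once on it, i.e., when the path consists of a single edge; any path of length at least two is monochromatic and hence \emph{not} conflict-free. Therefore, under any $1$-coloring of $G$, the only conflict-free paths are single edges. If $cfc(G)=1$, then every pair of distinct vertices must be joined by a conflict-free path, which forces each pair to be adjacent, so $G$ is complete. Conversely, if $G$ is complete, coloring all edges with one color makes the single edge between any two vertices a conflict-free path, so $cfc(G)=1$. This settles the equivalence.

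For part $(ii)$, since any nontrivial connected graph requires at least one color we have $cfc(G)\geq 1$. Combining this with the forward implication of $(i)$, a noncomplete graph $G$ cannot satisfy $cfc(G)=1$, and hence $cfc(G)\geq 2$. For part $(iii)$, the key idea is that a rainbow path is automatically conflict-free, because in a rainbow path every color is used exactly once. I would take a spanning tree $T$ of $G$, which has exactly $n-1$ edges, assign these $n-1$ edges pairwise distinct colors, and color all remaining (non-tree) edges arbitrarily. For any two vertices $u$ and $v$, the unique path connecting them in $T$ uses only tree edges and is therefore rainbow, hence conflict-free. Thus $G$ is conflict-free connected with $n-1$ colors, giving $cfc(G)\leq n-1$.

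I do not expect any serious obstacle here, as this is a collection of basic observations. The only point requiring genuine care is the remark in $(i)$ that a monochromatic path of length at least two is never conflict-free; this single fact drives both the forward direction of $(i)$ and the lower bound in $(ii)$, while the reduction ``rainbow $\Rightarrow$ conflict-free'' makes the tree construction in $(iii)$ immediate.
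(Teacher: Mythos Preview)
Your argument is correct in all three parts. The characterization in $(i)$ via the observation that a monochromatic path of length at least two is never conflict-free is exactly right, and $(ii)$ follows immediately. For $(iii)$, the spanning-tree construction with a rainbow coloring is valid, since a rainbow path is trivially conflict-free.

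As for comparison with the paper: the paper does not actually supply a proof of this proposition. It simply introduces the three statements with the sentence ``The following observations are immediate'' and moves on. So there is no ``paper's own proof'' to compare against; your write-up fills in the details that the authors regarded as self-evident. Your choice of a spanning tree for $(iii)$ is a natural way to make the bound concrete, though one could equally well just note that the trivial upper bound $cfc(G)\leq rc(G)\leq n-1$ (or directly $cfc(G)\leq |E(G)|$ followed by a spanning-subgraph reduction) gives the same conclusion.
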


In~\cite{Czap}, Czap et. al first gave the exact value of the
conflict-free connection number for a path.

\begin{thm}[\cite{Czap}]\label{thm1}
$cfc(P_n)=\lceil log_2 n\rceil$.
\end{thm}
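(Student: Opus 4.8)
The plan is to reduce the statement to a purely combinatorial problem about colorings of the edge sequence, and then pin down the extremal length by a recursion. Write $P_n$ as $v_0v_1\cdots v_{n-1}$ with edges $e_i=v_{i-1}v_i$ for $1\le i\le n-1$, so there are exactly $m=n-1$ edges. Since $P_n$ is a tree, the path joining any two vertices $v_a,v_b$ is unique and consists of a block of consecutive edges $e_{a+1},\dots,e_b$. Hence an edge-coloring is a $CFC$-coloring if and only if every such consecutive block (every ``interval'' of the edge sequence) contains a color used on exactly one of its edges. This translates the theorem into: find the least number of colors needed to color a sequence of $m$ items so that every interval contains a color of multiplicity one.

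Let $f(t)$ denote the maximum length $m$ of a sequence admitting such a coloring with $t$ colors; the goal is $f(t)=2^t-1$. For the lower bound on $cfc$ I would first show $f(t)\le 2^t-1$ by induction. Take a conflict-free coloring of a length-$m$ sequence with $t$ colors, and apply the condition to the whole interval: some color appears exactly once, on a single edge $e_j$. Deleting $e_j$ splits the sequence into a left part $e_1\cdots e_{j-1}$ and a right part $e_{j+1}\cdots e_m$; each remains conflict-free (every interval inside it is an interval of the original), and neither uses the color of $e_j$, so each is colored with at most $t-1$ colors. By induction each part has length at most $f(t-1)\le 2^{t-1}-1$, whence $m\le 2(2^{t-1}-1)+1=2^t-1$, with base cases $f(0)=0$ and $f(1)=1$.

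For the matching upper bound I would exhibit a coloring of $2^t-1$ edges using $t$ colors, for instance the least-significant-bit scheme: color edge $e_i$ with the position of the lowest set bit in the binary expansion of $i$. In any interval the largest color present sits on indices divisible by the corresponding power of two but not by the next; two such indices would sandwich a multiple of a higher power of two, forcing a strictly larger color and contradicting maximality, so the largest color in every interval is unique. This yields $f(t)=2^t-1$, and therefore $cfc(P_n)$ equals the least $t$ with $n-1\le 2^t-1$, that is $n\le 2^t$, which is exactly $t=\lceil \log_2 n\rceil$.

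The step I expect to be the main obstacle is the lower-bound recursion: one must argue carefully that after deleting the uniquely-colored edge the two residual segments are genuinely colored with one fewer color and remain conflict-free, so that the induction closes to give $f(t)\le 2f(t-1)+1$. A secondary technical point is the final bookkeeping identity $\lfloor \log_2(n-1)\rfloor+1=\lceil \log_2 n\rceil$, which converts the edge count $m=n-1$ back into the vertex count $n$ appearing in the statement.
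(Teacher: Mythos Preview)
The paper does not actually prove Theorem~\ref{thm1}; it is quoted from Czap, Jendro\v{l}' and Valiska~\cite{Czap} without proof, and the paper only later refers back to ``the coloring of a path stated in Theorem~\ref{thm1} of \cite{Czap}'' when building the coloring in Theorem~\ref{thm7}. So there is no in-paper argument to compare against.

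That said, your proposal is a correct and self-contained proof. The key recursion $f(t)\le 2f(t-1)+1$ is sound: the color that is unique on the full interval $[1,m]$ genuinely disappears from both residual segments (uniqueness on the whole interval forces it to be absent elsewhere), and every subinterval of a segment is a subinterval of the original, so the induced colorings stay conflict-free with at most $t-1$ colors. Your ruler-sequence construction (lowest set bit of the index) is the standard one and your uniqueness argument for the maximal color in an interval is correct: two occurrences at distance a multiple of $2^c$ would trap between them a multiple of $2^c$ carrying a strictly larger color. The closing arithmetic $\min\{t:\,n\le 2^t\}=\lceil\log_2 n\rceil$ is exactly what is needed; the alternative identity $\lfloor\log_2(n-1)\rfloor+1=\lceil\log_2 n\rceil$ you mention is only valid for $n\ge 2$, which is the relevant range here.
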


Then they
investigated the graphs
with conflict-free connection number~$2$.
Let $C(G)$ be the subgraph of $G$ induced
by the set of cut-edges of $G$.

\begin{thm}[\cite{Czap}]\label{thm2}
If $G$ is a noncomplete~$2$-connected graph, then $cfc(G)=2$.
\end{thm}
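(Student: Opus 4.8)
The plan is to prove the two inequalities $cfc(G)\ge 2$ and $cfc(G)\le 2$ separately. The lower bound is immediate: since $G$ is noncomplete, Proposition~\ref{pro1}$(ii)$ already gives $cfc(G)\ge 2$. So the entire content of the statement is the upper bound $cfc(G)\le 2$, for which I would exhibit a single explicit $2$-coloring and verify that it is a $CFC$-coloring.

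The coloring I would try is the simplest possible one: fix an arbitrary edge $e_0=a_0b_0\in E(G)$, assign it color $2$, and color every other edge with color $1$. The point of this choice is that color $2$ is used on exactly one edge of the whole graph; hence any simple path that traverses $e_0$ automatically has color $2$ appearing exactly once, and is therefore conflict-free, no matter how long the rest of the path is. Thus it suffices to show that for every pair of distinct vertices $u,v$ there is a $u$-$v$ path in $G$ passing through the edge $e_0$.

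The heart of the argument is therefore a purely structural claim: in a $2$-connected graph, for any prescribed edge $e_0$ and any two vertices $u,v$, some $u$-$v$ path contains $e_0$. I would prove this by a subdivision trick. First subdivide $e_0$ by inserting a new degree-$2$ vertex $w$, so that $a_0 - w - b_0$ replaces $e_0$; subdividing an edge of a $2$-connected graph keeps it $2$-connected. Then adjoin an auxiliary vertex $s$ adjacent to $u$ and $v$; attaching a degree-$2$ vertex to two vertices of a $2$-connected graph again yields a $2$-connected graph. Now I invoke the standard fact that any two vertices of a $2$-connected graph lie on a common cycle, applied to $s$ and $w$: the resulting cycle must use both edges at $s$ (namely $su$ and $sv$) and both edges at $w$ (namely $a_0w$ and $wb_0$). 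Deleting $s$ and undoing the subdivision recovers a $u$-$v$ path in $G$ through $e_0$, as required. A few degenerate cases (when $u$ or $v$ coincides with $a_0$ or $b_0$) would be handled directly using the fact that a $2$-connected graph is $2$-edge-connected, so that $G-e_0$ and $G-a_0$ remain connected.

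The step I expect to be the main obstacle is precisely this structural claim about a path through a prescribed edge, together with the bookkeeping of the degenerate endpoint cases; once it is in hand, conflict-freeness of the exhibited paths is automatic from the single-color-$2$-edge device, and combining $cfc(G)\le 2$ with the lower bound yields $cfc(G)=2$. As an alternative route to the same upper bound, I could instead use an open ear decomposition $G=C_0\cup P_1\cup\cdots\cup P_k$, color the initial cycle $C_0$ with one color-$2$ edge and the rest color $1$, and argue inductively over the ears; but I would expect the concatenation of conflict-free paths across successive ears to force a delicately strengthened induction hypothesis, which is why I favor the single-edge coloring above.
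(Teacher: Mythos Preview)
Your proposal is correct and follows essentially the same route as the paper: the structural claim you isolate is recorded verbatim as Lemma~\ref{lem1} (any two vertices of a $2$-connected graph are joined by a path through a prescribed edge), and the proof of the more general Theorem~\ref{thm4} specializes, in the single-block case of a $2$-connected graph, to exactly your device of coloring one fixed edge with color~$2$ and all remaining edges with color~$1$. The only addition in your write-up is an explicit proof of Lemma~\ref{lem1} via subdivision and the common-cycle characterization of $2$-connectivity, which the paper simply cites from~\cite{Czap}.
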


\begin{thm}[\cite{Czap}]\label{thm3}
If $G$ is a connected graph with at least~$3$ vertices and $C(G)$ is a linear forest
whose each component is of order~$2$, then $cfc(G)=2$.
\end{thm}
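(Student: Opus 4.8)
The plan is to establish the two inequalities $cfc(G)\ge 2$ and $cfc(G)\le 2$ separately. The lower bound is immediate: the hypothesis forces $G$ to contain at least one cut-edge, so $G$ is not complete, and since $G$ has at least $3$ vertices, Proposition~\ref{pro1}$(ii)$ gives $cfc(G)\ge 2$. All the work therefore lies in producing a conflict-free connection coloring that uses only two colors, say $1$ and $2$.

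For the upper bound I would first read off the structure from the block-cut tree of $G$, whose nodes are the blocks (maximal $2$-connected subgraphs, together with the single cut-edges) and the cut vertices. The hypothesis that $C(G)$ is a matching translates into the clean local statement that \emph{no cut vertex lies in two cut-edge blocks}, equivalently, every cut vertex is incident with at most one cut-edge. The guiding principle of the construction is to arrange a $2$-coloring under which any two vertices are joined by a path using color~$2$ \emph{exactly once}: such a path is automatically conflict-free, and (apart from trivially short paths) this is the only mechanism I would invoke.

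Concretely, I would color every cut-edge with color~$1$, and in each $2$-connected block $B$ pick one edge, color it $2$, and color all remaining edges of $B$ with color~$1$. Within a single block, a conflict-free path between any two of its vertices is guaranteed by Theorem~\ref{thm2} (trivially when $B$ is complete). The extra feature I need is routing flexibility: since $B$ is $2$-connected, $B$ minus its distinguished edge is still connected, so between any two vertices of $B$—in particular between the cut vertices where bridges attach—there is an all-color-$1$ route, and there is also a route traversing the distinguished color-$2$ edge exactly once. To join $u$ and $v$ in different blocks I would follow the path between them in the block-cut tree, routing monochromatically in color~$1$ through every block except one, where I route through its color-$2$ edge; every cut-edge contributes color~$1$. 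The total number of color-$2$ edges is then exactly one, so the resulting path is conflict-free. The matching condition is exactly what makes this possible: it ensures that the nodes encountered along the way are separated by genuine $2$-connected blocks affording this flexibility, rather than being chains of adjacent cut-edges, along which $cfc$ is forced to grow as in Theorem~\ref{thm1}.

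The step I expect to be the main obstacle is the underlying routing lemma together with its global splicing. I must show that in each $2$-connected block one can realize, between the prescribed pair of cut vertices (and for every within-block pair), both an all-color-$1$ path and a \emph{simple} path meeting the chosen color-$2$ edge exactly once, and that these local routes concatenate into a single simple path across the whole block-cut path without repeating vertices. Proving the existence of a simple path through a prescribed edge between prescribed endpoints in a $2$-connected graph—and arranging these choices coherently for all pairs of vertices at once—is the delicate heart of the argument; the tool I would lean on is $2$-connectivity (two internally disjoint paths and the fan lemma), which is precisely the property that the hypothesis on $C(G)$ secures by forbidding adjacent bridges.
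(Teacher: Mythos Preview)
Your proposal is correct and essentially identical to the paper's approach (which, following \cite{Czap}, proves the analogous Theorem~\ref{thm4} by the same block-decomposition routing): pick a distinguished edge in each nontrivial block---the paper does this via Lemma~\ref{lem2} so that the chosen edges form a matching---color it $2$ and every other edge $1$, then splice per-block subpaths, going through the color-$2$ edge in exactly one nontrivial block and avoiding it in all others. The routing fact you flag as the main obstacle, namely that in a $2$-connected block any two vertices are joined by a simple path through a prescribed edge, is precisely Lemma~\ref{lem1}.
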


In fact, we can weaken the condition of Theorem~\ref{thm2},
and get that the same result holds for 2-edge-connected graphs, whose proof is similar to that
of Theorem~\ref{thm3} in \cite{Czap}. For completeness, we give its
proof here. Before we proceed to the result and its proof, we need the following lemmas which are useful in our proof, and can be found in~\cite{Czap}.

\begin{lem}[\cite{Czap}]\label{lem1}
Let $u,v$ be two distinct vertices and let $e=xy$
be an edge of a~$2$-connected graph $G$.
Then there is a $u$ -- $v$ path in $G$ containing the edge $e$.
\end{lem}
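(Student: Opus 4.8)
The plan is to reduce the statement about a path through an \emph{edge} to the more symmetric statement about a path through a \emph{vertex}, which is exactly the content of the fan version of Menger's theorem. The key device is edge subdivision, which turns "passing through $e$" into "passing through a degree-two vertex."

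First I would subdivide the edge $e=xy$: insert a new vertex $w$ and replace $e$ by the two edges $xw$ and $wy$, obtaining a graph $G'$ on vertex set $V(G)\cup\{w\}$. I claim $G'$ is again $2$-connected. Indeed, $G'$ has at least three vertices and has no cut vertex. Deleting $w$ leaves $G-e$, which is connected because a $2$-connected graph on at least three vertices is bridgeless, so $e$ is not a cut-edge. Deleting any old vertex $z\notin\{x,y\}$ leaves $G-z$ with the edge $xy$ merely subdivided, and $G-z$ is connected since $G$ has no cut vertex; deleting $z\in\{x,y\}$ leaves $G-z$ with $w$ appended as a degree-one vertex off the surviving endpoint, which again stays connected. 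Hence $G'$ is $2$-connected.

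Next, since $G'$ is $2$-connected and the newly inserted vertex $w$ lies outside $\{u,v\}$, I would apply the fan lemma (a standard consequence of Menger's theorem; see~\cite{Bondy}) with source $w$ and target set $U=\{u,v\}$. This yields two paths $P_1$ from $w$ to $u$ and $P_2$ from $w$ to $v$ that meet only in $w$ and each of which meets $U$ only in its endpoint. Concatenating $P_1$ (reversed) with $P_2$ produces a genuine $u$--$v$ path $Q'$ in $G'$ passing through $w$; the absence of repeated vertices is guaranteed by the internal disjointness furnished by the fan.

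Finally I would undo the subdivision. Because $w$ has degree $2$ in $G'$, the path $Q'$ enters and leaves $w$ along the only available edges $xw$ and $wy$, so its segment around $w$ is $x$--$w$--$y$ in one of the two orientations. Replacing this segment by the single edge $xy$ converts $Q'$ into a $u$--$v$ walk $Q$ in $G$ that uses $e$; since only the degree-two vertex $w$ was suppressed, $Q$ remains a path, which is the desired $u$--$v$ path containing $e$. The only points demanding care — and the mild obstacle here — are verifying that subdivision preserves $2$-connectivity and dealing with the cases where $u$ or $v$ happens to coincide with $x$ or $y$. The subdivision device is precisely what lets all of these be handled uniformly, since the fan lemma requires only that $w\notin\{u,v\}$, and this always holds for the freshly inserted vertex.
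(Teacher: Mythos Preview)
Your argument is correct. The paper itself does not prove Lemma~\ref{lem1}; it merely quotes the statement from~\cite{Czap} and uses it as a black box, so there is no in-paper proof to compare against. Your subdivision trick---turning ``path through the edge $e$'' into ``path through the new degree-$2$ vertex $w$'' and then invoking the $2$-fan version of Menger---is a standard and clean way to obtain the result, and your verification that subdivision preserves $2$-connectivity and that the degenerate cases $u\in\{x,y\}$ or $v\in\{x,y\}$ are absorbed automatically (because the fan lemma only needs $w\notin\{u,v\}$) is accurate. In short, you have supplied a self-contained proof where the paper offers only a citation.
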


A \emph{block} of a graph $G$ is a maximal connected subgraph
of $G$ without cut-vertices.
A connected graph with no cut-vertex therefore has just one block,
namely the graph itself.
An edge is a block if and only if it is a cut-edge.
A block consisting of an edge is called trivial.
Note that any nontrivial block is~$2$-connected.

\begin{lem}[\cite{Czap}]\label{lem2}
Let $G$ be a connected graph. Then from its every nontrivial
block an edge can be chosen so that the set of all such chosen edges
forms a matching.
\end{lem}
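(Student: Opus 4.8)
The plan is to exploit the block-cut-vertex tree of $G$ and to coordinate the choice of edges with a rooting of this tree, so that every possible conflict between two chosen edges is ruled out in advance.

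First I would recall the structural fact that any two distinct blocks of $G$ meet in at most one vertex, and that such a common vertex is necessarily a cut-vertex. Consequently, if two chosen edges $e_{B_1}$ and $e_{B_2}$, coming from distinct nontrivial blocks $B_1$ and $B_2$, were to share an endpoint $w$, then $w$ would be a cut-vertex lying in $V(B_1)\cap V(B_2)$, with both edges incident to $w$. Thus the whole task reduces to guaranteeing that, at every cut-vertex $w$, at most one chosen edge is incident with $w$.

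Next I would root the block-cut-vertex tree $T$ of $G$, whose nodes are the blocks and the cut-vertices of $G$ and in which a block $B$ is adjacent to a cut-vertex $v$ precisely when $v\in V(B)$, at an arbitrary block. For every nontrivial block $B$ other than the root, let $v_B$ be the unique cut-vertex that is the parent of $B$ in $T$; for the root block leave $v_B$ undefined. Since each nontrivial block is $2$-connected and hence has at least three vertices, the graph $B-v_B$ is connected with at least two vertices, so it contains an edge; I would take $e_B$ to be such an edge, chosen arbitrarily, and for the root block simply any edge of $B$. By construction $e_B$ is not incident with $v_B$. I would then verify the matching property via the reduction above: fixing a cut-vertex $w$ and considering all nontrivial blocks containing $w$, in $T$ exactly one of them is the parent of the node $w$, while every other such block is a child of $w$ and hence has $v_B=w$; for each such child block the chosen edge $e_B$ avoids $w$ by construction. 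Therefore at most one chosen edge---possibly the one coming from the parent block of $w$---is incident with $w$, which is exactly what is needed. As $w$ was arbitrary, no two chosen edges share an endpoint, so the chosen edges form a matching.

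The main obstacle I anticipate is organizational rather than computational: one must set up the rooting and the notion of ``parent cut-vertex of a block'' carefully, so that the key dichotomy---at each cut-vertex, all but at most one of the incident blocks are children whose chosen edges avoid that vertex---is stated cleanly and correctly handles the root block and the trivial (cut-edge) blocks as degenerate cases. Once this bookkeeping is in place, the existence of an edge of $B$ avoiding $v_B$ is immediate from $2$-connectivity, and the final verification is routine.
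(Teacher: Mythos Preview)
Your argument is correct: rooting the block--cut-vertex tree and, for each nontrivial block, choosing an edge avoiding its parent cut-vertex is the standard way to establish this lemma, and your verification that at most one chosen edge meets any given cut-vertex is sound. Note, however, that the paper does not supply its own proof of this statement; Lemma~\ref{lem2} is quoted from~\cite{Czap} and used as a black box, so there is no in-paper argument to compare against.
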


\begin{thm}\label{thm4}
Let $G$ be a noncomplete $2$-edge-connected graph.
Then $cfc(G)=2$.
\end{thm}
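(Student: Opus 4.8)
The plan is to establish the two inequalities $cfc(G)\ge 2$ and $cfc(G)\le 2$ separately. The lower bound is immediate: since $G$ is noncomplete, Proposition~\ref{pro1}$(ii)$ gives $cfc(G)\ge 2$. All the work lies in exhibiting a $CFC$-coloring that uses only the two colors $1$ and $2$. The guiding principle, borrowed from the proof of Theorem~\ref{thm3}, is to let almost every edge receive color $1$ and to reserve color $2$ for a carefully chosen sparse set of edges, so that between any two vertices one can find a path traversing color $2$ exactly once; such a path is automatically conflict-free, because color $2$ then appears on exactly one of its edges.

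First I would record the structural consequence of $2$-edge-connectivity: $G$ has no cut-edge, so every block of $G$ is nontrivial and hence $2$-connected. Lemma~\ref{lem2} then yields a set $M$ of edges, exactly one from each block, forming a matching. I color every edge of $M$ with color $2$ and every remaining edge with color $1$. Because distinct blocks share no edge, each block $B$ contains precisely one color-$2$ edge, namely its representative $e_B\in M$.

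Next I would verify that this coloring is conflict-free. Fix distinct vertices $u,v$ and consider the unique path in the block--cut tree joining the block(s) of $u$ to the block(s) of $v$; it passes through blocks $B_1,\dots,B_m$ glued at successive cut-vertices, which together with $c_0=u$ and $c_m=v$ I denote $c_0,c_1,\dots,c_m$, so that inside each $B_i$ the entry vertex $c_{i-1}$ must be joined to the exit vertex $c_i$. Choose one index $j$ with $c_{j-1}\ne c_j$ (such $j$ exists, since $u\ne v$). In $B_j$, Lemma~\ref{lem1} supplies a $c_{j-1}$--$c_j$ path through the color-$2$ edge $e_{B_j}$. In every other block $B_i$ with $c_{i-1}\ne c_i$, the $2$-connectivity of $B_i$ provides two internally disjoint $c_{i-1}$--$c_i$ paths, at most one of which can contain the single edge $e_{B_i}$; I take one that avoids $e_{B_i}$, hence uses only color-$1$ edges. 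Concatenating these sub-paths along the cut-vertices yields a genuine $u$--$v$ path, simple because consecutive blocks meet only at their shared cut-vertex. This path carries color $2$ on exactly one edge, namely $e_{B_j}$, so it is conflict-free. As $u,v$ were arbitrary, the $2$-coloring is a $CFC$-coloring, giving $cfc(G)\le 2$ and therefore $cfc(G)=2$.

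The routine points—existence of the avoiding sub-paths and simplicity of the concatenation—are handled by Menger-type arguments and the edge-disjointness of blocks. The one place demanding care, and the main obstacle, is bookkeeping the color-$2$ count globally: one must ensure that exactly one block is traversed through its matching edge while all others are crossed on color $1$, and that degenerate blocks (where the path merely touches a cut-vertex, so $c_{i-1}=c_i$ and no edge of $B_i$ is used) contribute nothing and hence do not disturb this count.
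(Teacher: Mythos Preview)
Your proof is correct and follows essentially the same approach as the paper's: use Lemma~\ref{lem2} to select a matching $M$ with one edge per (nontrivial) block, assign color $2$ to $M$ and color $1$ elsewhere, and then build a conflict-free $u$--$v$ path by passing through the color-$2$ edge in exactly one block while avoiding it in all others. The only cosmetic differences are that the paper first disposes of the $2$-connected case by invoking Theorem~\ref{thm2} and always uses the first block $B_1$ for the color-$2$ passage, whereas you handle all cases uniformly through the block decomposition and pick an arbitrary index $j$; you are also slightly more explicit about why an avoiding path exists in the other blocks (via Menger), which the paper simply asserts.
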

\begin{proof}
If $G$ is a noncomplete $2$-connected graph,
then we are done.
So we only consider the case that $G$ has
at least one cut-vertex.
Note that
$G$ has a block decomposition with each block having at least~$3$
vertices, that is, each block is nontrivial.
By Lemma~$2$, we choose from each block one edge so that all
chosen edges create a matching $S$.
Next we color the edges from $S$ with color~$2$
and all remaining edges of $G$ with color~$1$.

Now we prove this coloring makes $G$ conflict-free connected,
that is, for any two distinct vertices $x$ and $y$,
we need to find a conflict-free $x$ -- $y$ path.

{\bf Case~$1$.} Let $x$ and $y$ belong to the same block $B$.
Then by Lemma~$1$, there is an $x$ -- $y$ path, in $B$,
containing the edge of $B$ colored with color~$2$.
Clearly, this $x$ -- $y$ path is conflict-free.

{\bf Case~$2$.} Let $x$ and $y$ be in different blocks.
Consider a shortest $x$ -- $y$ path in $G$.
This path goes through blocks, say $B_1,B_2,\dots,B_r$, $r\geq 2$,
in this order, where $x\in V(B_1)$ and $y\in V(B_r)$.
Let $v_i$ be a common vertex of blocks $B_i$ and $B_{i+1}$,
$1\leq i\leq r-1$. Set $y=v_r$. Clearly, $x\neq v_1$.
We choose an $x$ -- $v_1$ path in $B_1$ going through the edge assigned~$2$,
and then a $v_i$ -- $v_{i+1}$ path in $B_i$ omitting the edge
colored with~$2$ in $B_i$ for $2\leq i\leq r$.
Obviously, the concatenation of the above $r$ paths is
an conflict-free $x$ -- $y$ path.
\end{proof}

For a general graph $G$ with connectivity 1,
the authors of \cite{Czap} gave the bounds on $cfc(G)$.
Let $G$ be a connected graph
and $h(G)=max\{cfc(K): K$ is a component of $C(G)\}$.
In fact, $h(G)=0$ if $G$ is~$2$-edge-connected.
So we restate that theorem as follows.

\begin{thm}[\cite{Czap}]\label{thm5}
If $G$ is a connected graph with at least one cut-edge,
then $h(G)\leq cfc(G)\leq h(G)+1$. Moreover, these bounds
are tight.
\end{thm}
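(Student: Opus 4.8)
The plan is to establish the two inequalities separately and then produce extremal graphs for each, keeping in mind throughout that a cut-edge lies in no cycle, so $C(G)$ is a forest and each of its components $K$ is a tree.

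\textbf{Lower bound.} First I would show $h(G)\le cfc(G)$. Fix a $CFC$-coloring $c$ of $G$ using $cfc(G)$ colors and let $K$ be any component of $C(G)$. The key observation is that for any two vertices $u,v\in V(K)$, the unique $u$--$v$ path $P$ of the tree $K$ is in fact the \emph{only} $u$--$v$ path in all of $G$: every edge of $P$ is a cut-edge separating $u$ from $v$, so each of them must be traversed, and since consecutive cut-edges of $P$ share a vertex, a simple path cannot detour into any block hanging off that shared vertex without losing the ability to return to it. Consequently the restriction $c|_K$ is itself a $CFC$-coloring of $K$, whence $cfc(K)\le cfc(G)$; maximizing over the components gives $h(G)\le cfc(G)$. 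I expect this forcing argument---showing that no conflict-free $u$--$v$ path can stray outside $K$---to be the most delicate point of the whole proof.

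\textbf{Upper bound.} Next I would construct a $CFC$-coloring with $h(G)+1$ colors, merging the idea behind Theorem~\ref{thm4} with a coloring of the cut-edge forest. Using Lemma~\ref{lem2}, pick one edge from each nontrivial block so that the chosen edges form a matching $S$; color every edge of $S$ with the fresh color $h(G)+1$, color all other edges of the nontrivial blocks with color $1$, and color each component $K$ of $C(G)$ by an optimal $CFC$-coloring drawn from the palette $\{1,\dots,h(G)\}$ (possible since $cfc(K)\le h(G)$). To verify conflict-free connectivity, take $x,y$ and examine the blocks $B_1,\dots,B_r$ that a shortest $x$--$y$ path passes through, as in the proof of Theorem~\ref{thm4}. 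If $x,y$ lie in the same component of $C(G)$, their forced path from the lower-bound analysis stays inside $K$ and is conflict-free by construction. Otherwise the path meets at least one nontrivial block; I would route through the color-$(h(G)+1)$ edge of exactly one such block (via Lemma~\ref{lem1}) and, in every other nontrivial block, choose a segment avoiding its special edge, which is possible because deleting a single edge leaves a $2$-connected block connected. Concatenating these internally disjoint block-segments yields a simple $x$--$y$ path on which color $h(G)+1$ appears exactly once, so the path is conflict-free; hence $cfc(G)\le h(G)+1$.

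\textbf{Tightness.} Finally I would exhibit extremal graphs. For the lower bound, any path $P_n$ has $C(P_n)=P_n$ with a single component, so by Theorem~\ref{thm1} we get $h(P_n)=cfc(P_n)=\lceil\log_2 n\rceil$, giving $cfc(G)=h(G)$. For the upper bound, attach a pendant edge to a noncomplete $2$-connected graph, for instance a pendant vertex on $C_4$: its only cut-edge forms a $K_2$, so $h(G)=cfc(K_2)=1$, while $G$ is noncomplete and hence $cfc(G)\ge 2$ by Proposition~\ref{pro1}; combined with the upper bound this forces $cfc(G)=2=h(G)+1$.
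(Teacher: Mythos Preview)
The paper does not actually prove Theorem~\ref{thm5}; it is quoted from \cite{Czap} without proof, so there is no in-paper argument to compare against directly. That said, your proof is correct, and the upper-bound construction you give---selecting a matching $S$ across the nontrivial blocks via Lemma~\ref{lem2}, reserving a fresh color for $S$, and then routing through exactly one $S$-edge via Lemma~\ref{lem1} while avoiding $S$ in the remaining nontrivial blocks---is precisely the mechanism the paper itself deploys in its proofs of Theorem~\ref{thm4} and Theorem~\ref{thm7}. Your lower-bound argument (that for $u,v$ in a single component $K$ of $C(G)$ the tree path in $K$ is the \emph{unique} $u$--$v$ path in $G$, because each of its edges is a cut-edge separating $u$ from $v$ and simplicity forbids detours at the shared endpoints) is the natural one and is implicitly used elsewhere in the paper as well. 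The tightness examples are fine: $P_n$ realizes $cfc(G)=h(G)$ and a pendant on $C_4$ realizes $cfc(G)=h(G)+1$. In short, your write-up is sound and aligns with the paper's toolkit; there is nothing further to contrast.
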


Line graphs form one of the most important graph class, and there have been a lot of results on line graphs, see \cite{Line}. In this paper we also deal with line graphs. Recall that the \emph{line graph} of a graph $G$ is the graph $L(G)$
whose vertex set $V(L(G))=E(G)$ and two vertices
$e_1,e_2$ of $L(G)$ are adjacent if and only if
they are adjacent in $G$.
The \emph{iterated line graph}
of a graph $G$, denoted by $L^2(G)$,
is the line graph of the graph $L(G)$.
In general, the \emph{$k$-iterated line graph}
of a graph $G$, denoted by $L^k(G)$,
is the line graph of the graph $L^{k-1}(G)$,
where $k\geq 2$ is a positive integer.
We call a graph \emph{claw-free} if it does not contain
a claw $K_{1,3}$ as its induced subgraph.
Notice that a line graph is claw-free; see~\cite{Beineke}
or \cite{Line}.

This paper is organized as follows: In Section~$2$,
we give some properties concerning the line graphs,
and based on them, we show that for an arbitrary connected graph $G$, there exists a positive integer $k$ such that $cfc(L^k(G))\leq 2$.
In Section~$3$,
we start with the investigation of one special family of graphs,
and then classify the graphs among them with $cfc(G)=h(G)+1$.
Using this result, we first get the exact value of the conflict-free connection number of a connected claw-free graph.
As a corollary, for a connected line graph $G$,
we obtain the value of $cfc(G)$.
Then, we prove that for an arbitrary connected graph $G$
and an arbitrary positive integer $k$, we always have $cfc(L^{k+1}(G))\leq cfc(L^k(G))$,
with only the exception that $G$ is isomorphic to a star of order at least~$5$ and $k=1$.
Finally, we obtain the exact values
of $cfc(L^k(G))$, and use them as an efficient tool to get the smallest
nonnegative integer $k_0$ such that $cfc(L^{k_0}(G))=2$.

\section{Dynamic behavior of the line graph operator}

If one component $\mathcal{C}$ of $C(G)$ is either a cut-edge
or a path of order at least~$3$ whose internal vertices are
all of degree~$2$ in $G$, then we call $\mathcal{C}$
\emph{a cut-path} of $G$.

\begin{lem}\label{lem4}
For a connected claw-free graph $G$,
each component
of $C(G)$ is a cut-path of $G$.
\end{lem}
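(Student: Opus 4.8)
The plan is to exploit the tension between two facts: claw-freeness forces some pair among the neighbors of a vertex to be adjacent, while the defining property of a cut-edge forbids it from lying on any cycle. I would first establish that every vertex of $C(G)$ has degree at most~$2$ in $C(G)$, deduce that each component is a path, and then pin down the internal vertices of such paths.

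First I would show that no vertex of $G$ is incident with three distinct cut-edges. Suppose, to the contrary, that $v$ is incident with cut-edges $va$, $vb$, $vc$. Since $G$ is claw-free, the set $\{v,a,b,c\}$ cannot induce a $K_{1,3}$, so at least two of $a,b,c$ are adjacent, say $a$ and $b$. Then $v,a,b$ together with the edge $ab$ form a triangle, so $va$ lies on a cycle, contradicting that $va$ is a cut-edge. Hence every vertex of $C(G)$ has degree at most~$2$ in $C(G)$, which means each component of $C(G)$ is either a path or a cycle. A cycle is impossible, since no edge lying on a cycle can be a cut-edge; therefore each component of $C(G)$ is a path. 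A component that is a single edge is already a cut-edge, hence a cut-path, so only the components of order at least~$3$ require further work.

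It remains to treat the internal vertices of such a path. Let a component be $v_0v_1\cdots v_m$ with $m\geq 2$, and let $v_i$ with $1\leq i\leq m-1$ be an internal vertex, so that $v_{i-1}v_i$ and $v_iv_{i+1}$ are both cut-edges. Suppose for contradiction that $\deg_G(v_i)\geq 3$, witnessed by a further neighbor $w\notin\{v_{i-1},v_{i+1}\}$; by the degree bound just proved, the edge $v_iw$ is not a cut-edge. Applying claw-freeness to $\{v_i,v_{i-1},v_{i+1},w\}$ again forces some two of $v_{i-1},v_{i+1},w$ to be adjacent, and in each of the three resulting cases that adjacency produces a triangle through one of the cut-edges $v_{i-1}v_i$ or $v_iv_{i+1}$, contradicting the cut-edge property. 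Thus $\deg_G(v_i)=2$ for every internal vertex, and the path is a cut-path as required.

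The whole argument rests on a single principle, ``claw-freeness supplies an edge among the neighbors, and the cut-edge property forbids that edge,'' so no step presents a genuine obstacle once this principle is isolated. The only point demanding care is the exhaustive case analysis at the internal vertices, where one must check that every way of placing the forced adjacency creates a triangle containing a cut-edge; this is routine but is the step most prone to an overlooked case.
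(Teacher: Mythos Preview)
Your proof is correct and follows essentially the same approach as the paper's own proof: both argue that a vertex incident with three cut-edges (respectively, an internal vertex of a cut-edge path with an extra neighbor) would, by claw-freeness, force an edge among the neighbors and hence place a cut-edge on a triangle. Your write-up is in fact more explicit than the paper's, which simply asserts that $v$ together with three $C(G)$-neighbors ``induce a $K_{1,3}$'' without spelling out the cycle contradiction that rules out adjacencies among those neighbors.
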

\begin{proof}
Firstly, $C(G)$ is a linear forest.
Otherwise, there exists a vertex $v\in C(G)$
whose degree is larger than~$2$ in $C(G)$.
Then $v$ and three neighbors of $v$ in $C(G)$
induce a $K_{1,3}$ in $G$,
contradicting the condition that $G$ is claw-free.
Secondly, with a similar reason, if one component
of $C(G)$ has at least~$3$ vertices,
then all of its internal vertices must be
of degree~$2$ in $G$. So, each component of $C(G)$
must be a cut-path of $G$.
\end{proof}

Since a line graph is claw-free,
Lemma~\ref{lem4} is valid for line graphs.

\begin{cor}\label{cor1}
For a connected line graph $G$,
every component
of $C(G)$ is a cut-path of $G$.
\end{cor}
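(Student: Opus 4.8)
The plan is to obtain this statement as an immediate specialization of Lemma~\ref{lem4}. The single ingredient needed beyond that lemma is the classical fact (due to Beineke, and recorded in the excerpt above) that every line graph is claw-free, i.e.\ contains no induced $K_{1,3}$. With this in place the corollary requires no genuinely new argument.

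In detail, I would proceed as follows. Let $G$ be a connected line graph. By Beineke's characterization $G$ has no induced claw, so $G$ is a connected claw-free graph and therefore satisfies the hypothesis of Lemma~\ref{lem4} verbatim. Applying that lemma to $G$ yields at once that every component of $C(G)$ is a cut-path of $G$, which is exactly the assertion. The connectedness hypothesis on $G$ is used only to ensure that $C(G)$ is defined on a single host graph; it plays no further role.

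Because the deduction is a one-step invocation, there is no real obstacle within the corollary itself. All of the combinatorial content has already been discharged in the proof of Lemma~\ref{lem4}, where the two forbidden configurations, namely a vertex of degree at least~$3$ in $C(G)$ and a component of order at least~$3$ with an internal vertex of degree exceeding~$2$ in $G$, are each ruled out by exhibiting an induced $K_{1,3}$. The only point that merits a moment's care is to confirm that the cited chain genuinely gives the implication ``line graph $\Rightarrow$ claw-free'' without any additional hypothesis, so that nothing beyond the stated connectedness of $G$ is being silently assumed; this is routine.
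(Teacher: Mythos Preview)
Your proposal is correct and matches the paper's own treatment exactly: the paper simply remarks that line graphs are claw-free and then states the corollary as an immediate consequence of Lemma~\ref{lem4}. No additional argument is given or needed.
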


In~$1969$, Chartrand and Stewart~\cite{Char2}
showed
that $\kappa(L(G))\geq \lambda(G)$,
if $\lambda(G)\geq2$.
So, the following result is obvious.

\begin{lem}\label{lem3}
The line graph of a~$2$-edge-connected graph is~$2$-connected.
\end{lem}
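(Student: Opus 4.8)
The plan is to read the conclusion directly off the Chartrand--Stewart inequality $\kappa(L(G))\geq\lambda(G)$ recalled just above the statement. Since $G$ is $2$-edge-connected we have $\lambda(G)\geq 2$, which is precisely the hypothesis under which that inequality was asserted in~\cite{Char2}. Chaining the two bounds gives $\kappa(L(G))\geq\lambda(G)\geq 2$, and this is exactly what it means for $L(G)$ to be $2$-connected. The only side condition to verify is the order requirement, namely that $L(G)$ has at least three vertices; but a simple $2$-edge-connected graph has at least three edges (the smallest such graph is the triangle), so $|V(L(G))|=|E(G)|\geq 3$ and no degenerate case can interfere with interpreting $\kappa\geq 2$ as genuine $2$-connectivity.

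If one prefers a self-contained argument that avoids invoking the connectivity inequality, I would instead show directly that $L(G)$ has no cut-vertex via a deletion identity. First I would observe that for any edge $e$ of $G$, the induced subgraph $L(G)-e$ obtained by deleting the vertex corresponding to $e$ is precisely $L(G-e)$: whether two edges distinct from $e$ share an endpoint is unaffected by removing $e$ from $G$, so the two graphs have the same vertex set and the same adjacencies. Next, because $G$ is $2$-edge-connected, $G-e$ remains connected and retains at least one edge, so its line graph $L(G-e)$ is connected. Hence $L(G)-e$ is connected for every vertex $e$ of $L(G)$, which means $L(G)$ has no cut-vertex; combined with the connectedness of $L(G)$ and the order bound above, this yields $2$-connectivity.

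Neither route presents a real obstacle, since the statement is essentially a corollary of the cited theorem. The only points demanding a moment of care are confirming the identity $L(G)-e=L(G-e)$ in the self-contained version and, in either version, ruling out the small-order degeneracies so that the bound $\kappa(L(G))\geq 2$ certifies $2$-connectivity rather than being vacuous.
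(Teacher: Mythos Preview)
Your primary argument is exactly the paper's: immediately preceding the lemma, the paper recalls Chartrand and Stewart's inequality $\kappa(L(G))\geq\lambda(G)$ for $\lambda(G)\geq 2$ and then declares the lemma ``obvious,'' which is precisely your chain $\kappa(L(G))\geq\lambda(G)\geq 2$. Your additional self-contained route via the identity $L(G)-e=L(G-e)$ and your care about the order condition $|V(L(G))|\geq 3$ are correct extras that the paper does not spell out.
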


Now, we examine the dynamic behavior of the line graph operator, and
get our main result of this section.

\begin{thm}\label{thm6}
For any connected graph $G$, there exists a positive integer
$k$ such that $cfc(L^k(G))\leq 2$.
\end{thm}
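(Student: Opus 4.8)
The plan is to reduce everything to one fact: as soon as some iterate $L^k(G)$ is $2$-edge-connected, we are finished. Indeed, if $L^k(G)$ is complete then $cfc(L^k(G))=1$ by Proposition~\ref{pro1}$(i)$, and if it is noncomplete and $2$-edge-connected then $cfc(L^k(G))=2$ by Theorem~\ref{thm4}; in either case $cfc(L^k(G))\le 2$. Thus it suffices to show that for every connected $G$ which is not a path, some iterate is $2$-edge-connected. The path case is disposed of directly: since $L(P_n)=P_{n-1}$ we have $L^k(P_n)=P_{n-k}$, and choosing $k$ with $n-k\le 4$ gives $cfc(L^k(P_n))=\lceil\log_2(n-k)\rceil\le 2$ by Theorem~\ref{thm1}. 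Also, if $G$ is already $2$-edge-connected, then $L(G)$ is $2$-connected by Lemma~\ref{lem3}, so $k=1$ works.

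So assume $G$ is connected, not a path, and has at least one cut-edge. First I would pass to $H:=L(G)$, which is a line graph, hence claw-free, so Corollary~\ref{cor1} applies to $H$ and to all its further iterates. Moreover $H$ contains a nontrivial block: since $G$ is not a path it has either a cycle, whose line graph is a cycle in $H$, or a vertex of degree at least~$3$, whose incident edges form a clique $K_{\ge 3}$ in $H$; either way $H$ has a $2$-edge-connected subgraph on at least~$3$ vertices. For such an $H$ I introduce the invariant $\ell(H)$, the maximum number of edges in a component of $C(H)$ (a cut-path, by Corollary~\ref{cor1}), with the convention $\ell(H)=0$ exactly when $H$ is $2$-edge-connected.

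The heart of the argument is a \emph{shrinking lemma}: if $H$ is claw-free, connected, and has a nontrivial block, then $\ell(L(H))\le\max\{\ell(H)-1,0\}$, while $L(H)$ still has a nontrivial block. To establish it I would track each cut-path $P=w_0w_1\cdots w_m$ of $H$, whose internal vertices have degree~$2$, through the line-graph operator: its edges $f_1,\dots,f_m$ become a path $f_1-\cdots-f_m$ in $L(H)$. The endpoint edge incident to a degree-$\ge 3$ vertex $w_0$ (resp.\ $w_m$) becomes adjacent to the entire clique formed by the other edges at that vertex, and hence lands inside a nontrivial block; a leaf endpoint of $P$ becomes a leaf of $L(H)$. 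Consequently the new cut-path carried by $P$ has exactly $m-1$ edges. Crucially, no new cut-edge appears inside a nontrivial block, because the line graph of a $2$-edge-connected graph is $2$-connected by Lemma~\ref{lem3}; and every vertex of degree at least~$3$ becomes a clique and so contributes no cut-edge. Iterating the shrinking lemma from $H=L(G)$, the nonnegative integer $\ell$ strictly decreases until it reaches $0$, at which point the graph is $2$-edge-connected, and the reduction of the first paragraph applies.

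The hard part will be the precise bookkeeping inside the shrinking lemma: one must verify that the cut-edges of $L(H)$ are \emph{exactly} the shifted interiors of the cut-paths of $H$, i.e.\ that the nontrivial blocks and the branch vertices of $H$ genuinely contribute no cut-edges to $L(H)$ (this is where Lemma~\ref{lem3} and the clique structure at high-degree vertices are indispensable), and that each shortened cut-path is correctly anchored to a nontrivial block or ends in a leaf. Once this local correspondence is pinned down, the strictly decreasing invariant $\ell$ forces termination after finitely many steps and produces the required $k$.
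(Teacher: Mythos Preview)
Your proposal is correct and follows essentially the same strategy as the paper: both argue that the cut-paths of the iterated line graphs (which, by Corollary~\ref{cor1}, are paths whose internal vertices have degree~$2$) shrink by one edge under each application of $L$, so after finitely many steps the resulting graph has $cfc\le 2$. The only differences are cosmetic: you treat paths $P_n$ separately and iterate until the graph becomes $2$-edge-connected (invoking Theorem~\ref{thm4}), whereas the paper folds paths into the general case and stops one step earlier, when every component of $C(L^k(G))$ is a single edge, invoking Theorem~\ref{thm3}.
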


\begin{proof} If $G$ is a~$2$-edge-connected graph,
then by Proposition~\ref{pro1}, Theorem~\ref{thm2} and Lemma~\ref{lem3},
we obtain $cfc(L(G))\leq 2$.
In this case, we set $k=1$.
In the following, we concentrate on the graphs having
at least one cut-edge.

Let $\mathcal{P}$ be a set of paths in $C(G)$ who have at least one
internal vertex and whose internal vertices are all of degree~$2$ in $G$.
If $\mathcal{P}=\emptyset$, then $L(G)$ is~$2$-edge-connected.
Otherwise, if there is a cut-edge $e_1e_2$ in $L(G)$, then
there is a path of length~$2$ in $G$ whose internal vertex is of degree~$2$, which is
a contradiction. Thus, by Proposition~\ref{pro1} and Theorem~\ref{thm4},
we have $cfc(L(G))\leq 2$. Then we also set $k=1$ in this case.

If $\mathcal{P}\neq\emptyset$, let $p$ be the length of a longest path among $\mathcal{P}$.
Then, by Corollary~\ref{cor1}, each component of $C(L^i(G))$
must be a cut-path of $L^i(G)$ for $1\leq i \leq p$.
Since $L(P_j)=P_{j-1}$ for any positive integer $j\geq 1$,
each component of $C(L^{p-1}(G))$ is of order~$2$.
By Theorem~\ref{thm3}, we have $cfc(L^{p-1}(G))=2$.
Thus, we set $k=p-1$ in this case.

The proof is thus complete.
\end{proof}

\section{The values $cfc(L^{k}(G))$ of iterated line graphs}

In this section, we first investigate the connected graphs $G$
having at least one cut-edge and each component of
$C(G)$ is a cut-path of $G$.
Among them, we characterize the graphs $G$ satisfying $cfc(G)=h(G)$,
and the graphs $G$ satisfying $cfc(G)=h(G)+1$.
Let $G$ be a connected graph of order $n$.
If $n=2$, $G\cong P_2$,
and hence $cfc(G)=h(G)=1$.
In the following, we assume $n\geq 3$.
If $h(G)=1$,
then by Theorem~\ref{thm3}, we always
have $cfc(G)=2=h(G)+1$.
So we only need to discuss the case of $h(G)\geq 2$.

\begin{thm}\label{thm7}
Let $G$ be a connected graph having at least one cut-edge,
and $C(G)$ be its linear forest whose each component is
a cut-path of $G$ and $h(G)\geq 2$.
Then $cfc(G)=h(G)+1$ if and only if there are at least
two components of $C(G)$ whose conflict-free connection numbers
attain $h(G)$; and $cfc(G)=h(G)$ if and only if there is only one component of $C(G)$ whose conflict-free connection number
attains $h(G)$.
\end{thm}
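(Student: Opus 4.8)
The plan is to combine Theorem~\ref{thm5} with two structural observations and then split the work into a short lower bound and a longer constructive upper bound. Write $h=h(G)$. By Theorem~\ref{thm5} we already know $h\le cfc(G)\le h+1$, so $cfc(G)\in\{h,h+1\}$, and since $h$ is the maximum over the components of $C(G)$, at least one component attains $h$; thus ``exactly one component attains $h$'' and ``at least two components attain $h$'' are complementary. Consequently the two biconditionals are equivalent, and it suffices to prove the two implications: (a) if at least two components attain $h$ then $cfc(G)=h+1$, and (b) if exactly one component attains $h$ then $cfc(G)=h$. The key structural fact I will use repeatedly is that each component $K$ of $C(G)$ is a cut-path, so its internal vertices have degree~$2$ in $G$; hence for any two vertices of $K$ the only $u$--$v$ path in $G$ is the subpath inside $K$. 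Therefore the restriction to $K$ of \emph{any} conflict-free connection coloring of $G$ is itself a conflict-free connection coloring of the path $K$; in particular, a $t$-coloring of $G$ restricts on $K$ to a coloring that must use at least $cfc(K)$ colors, and when $t=cfc(K)=h$ it must use \emph{all} $h$ colors on $K$.

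For the lower bound (a), let $K',K''$ be two components with $cfc(K')=cfc(K'')=h$. Using the block-cut tree I would choose the endpoint $x$ of $K'$ and the endpoint $y$ of $K''$ lying on the sides of $K',K''$ that point away from each other, so that every $x$--$y$ path is forced to traverse both $K'$ and $K''$ in their entirety. Suppose, for contradiction, that $G$ admits a conflict-free connection coloring with $h$ colors. By the structural fact, its restrictions to $K'$ and to $K''$ each use all $h$ colors, so every color occurs at least once inside $K'$ and at least once inside $K''$. Since every $x$--$y$ path contains all edges of $K'$ and of $K''$, each color occurs at least twice on such a path; the only remaining freedom is rerouting inside the $2$-edge-connected blocks, which can only \emph{add} occurrences. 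Hence no color occurs exactly once on any $x$--$y$ path, so there is no conflict-free $x$--$y$ path, a contradiction. Therefore $cfc(G)=h+1$. Note this argument is clean precisely because block detours cannot decrease color multiplicities.

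For the upper bound (b), let $K_0$ be the unique component with $cfc(K_0)=h$, all other components having connection number at most $h-1$. I would build an $h$-coloring on the palette $\{1,\dots,h\}$ as follows: color $K_0$ by a canonical path-coloring realizing $cfc(K_0)=h$ in which the top color $h$ is placed on a single ``splitting'' edge (so color $h$ appears exactly once on $K_0$); color every other cut-path by an optimal path-coloring drawn from $\{1,\dots,h-1\}$, never using color~$h$; and color each nontrivial ($2$-edge-connected) block with the base colors $\{1,2\}$ via the matching trick from the proof of Theorem~\ref{thm4}, while additionally earmarking inside each block a ``detour'' edge (the non-direct route between its two attaching cut-vertices) that may carry a higher color serving as a pair-specific unique separator. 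The verification then proceeds by the block-cut-tree route of an arbitrary pair $(u,v)$: a route that passes fully through $K_0$ picks up the color-$h$ edge of $K_0$ as a candidate uniquely occurring color; a route whose forced cut-path portion already spans two lower cut-paths can be steered through a detour edge of the intervening block, colored so as to supply a uniquely occurring color on that particular route; and routes confined to the part colored from $\{1,\dots,h-1\}$ are handled by the lower-palette structure (ultimately by Theorem~\ref{thm3} and Proposition~\ref{pro1} at the base level).

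The hard part will be exactly this verification, and it is the heart of the theorem. Because the higher colors are \emph{reused} — on the single edge of $K_0$ and on various block-detour edges — a given route may meet color $h$ more than once, so I must show that whenever this happens some \emph{other} color is uniquely placed on a suitable alternative route; my small test cases show that it is precisely the freedom to traverse a block by its direct edge for one pair and by its detour edge for another that makes this possible, provided the detour edges are colored coherently. To organize this cleanly I expect to argue by induction on $h$: the unique color-$h$ edge of $K_0$ is a cut-edge whose removal splits $G$ into two parts, each with value at most $h-1$, and the ``exactly one maximum component'' hypothesis must be used to bound how many near-maximal cut-paths fall on each side, so that the inductive form of the statement applies to the parts while the block detours bridge conflict-free paths across $K_0$. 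Making the choice of splitting edge and detour colors simultaneously compatible on both sides is the main obstacle I anticipate.
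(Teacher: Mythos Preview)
Your lower bound (a) is essentially the paper's argument, and your structural observation that the restriction of any CFC-coloring of $G$ to a cut-path $K$ must itself be a CFC-coloring of $K$ is exactly what makes the paper's one-line justification rigorous.

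The gap is in your upper bound (b). You propose to color the nontrivial blocks with $\{1,2\}$ in the style of Theorem~\ref{thm4}, then layer on extra ``higher-color detour edges,'' and finally run an induction on $h$; you yourself flag the compatibility of these detour colors as ``the main obstacle.'' This is the wrong palette for the blocks, and it is what creates the obstacle. The paper's construction is much simpler: after coloring $K_0$ so that the top color $h$ sits on a single edge $e_0$ and coloring every other cut-path from $\{1,\dots,h-1\}$, take the matching $S$ of Lemma~\ref{lem2} over the nontrivial blocks and color the edges of $S$ with color $h$ (not $2$), and color every remaining block edge with color $1$. No further detour edges, no induction.

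With this choice the verification is immediate and does not require tracking lower colors at all. For an arbitrary pair $x,y$, look at the block sequence $B_1,\dots,B_r$ along a shortest $x$--$y$ path. If the route is forced through $e_0$, then in every nontrivial $B_i$ choose a $v_{i-1}$--$v_i$ path that avoids the matching edge (possible because each nontrivial block is $2$-connected, so deleting one edge leaves it connected and the remaining edges are all color~$1$); the resulting $x$--$y$ path meets color $h$ exactly once, on $e_0$. If the route need not pass through $e_0$, pick one nontrivial $B_i$ and, by Lemma~\ref{lem1}, route through its matching edge; in every other nontrivial $B_j$ avoid the matching edge. Again color $h$ occurs exactly once. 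In both cases color $h$ is the conflict-free witness, so you never have to argue about the lower colors on the cut-paths, and the feared ``route meets color $h$ more than once'' situation simply does not arise. Replacing your $\{1,2\}$ block palette by $\{1,h\}$ is the missing idea.
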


\begin{proof} We first consider the case that
there are at least two components of $C(G)$
whose conflict-free connection numbers
attain $h(G)$, say $\mathcal{C}_1$ and $\mathcal{C}_2$.
Consider the two vertices $v_1\in V(\mathcal{C}_1)$
and $v_2\in V(\mathcal{C}_2)$ such that the distance $d(v_1,v_2)$
between $v_1$ and $v_2$ is maximum. Let $c$ be a $CFC$-coloring of $G$ with
$h(G)$ colors.
Since any $v_1$ -- $v_2$ path in $G$
contains all the edges of $\mathcal{C}_1$ and $\mathcal{C}_2$,
there is no conflict-free path connecting $v_1$ and $v_2$.
Consequently, $h(G)< cfc(G)$.
Together with Theorem~\ref{thm5},
we have $cfc(G)=h(G)+1$ in this case.

Next, we assume that there is only one component of $C(G)$
whose conflict-free connection number
is $h(G)$, say $\mathcal{C}_0$.
Now we give an edge-coloring of $G$. First, we color
$\mathcal{C}_0$ with $h(G)$ colors, say~$1,2,\dots,h(G)$, just like
the coloring of a path stated in Theorem~\ref{thm1} of \cite{Czap}.
Let $e_0$ be the edge colored with color $h(G)$ in $\mathcal{C}_0$.
Similarly, we color all the other components $K$ of $C(G)$
with the colors from $\{1,\dots,h(G)-1\}$.
Note that only $e_0$ is assigned $h(G)$ among all the edges of $C(G)$.

Then according to Lemma~$2$, we choose in any nontrivial block
of $G$ an edge so that all chosen edges form a matching $S$.
We color the edges from $S$ with color $h(G)$,
and the remaining edges with color~$1$.

In the following we have to  show that for any two distinct vertices
$x$ and $y$, there is a conflict-free $x$ -- $y$ path.
If the vertices $x$ and $y$ are from the same component of $C(G)$,
then such a path exists according to Theorem~\ref{thm1}.
If they are in the same nontrivial block,
then by Lemma~$1$, there is an $x$ -- $y$ path going through the edge
assigned $h(G)$.
If none of the above situations appears, then $x$ and $y$
are either from distinct components of $C(G)$,
or distinct nontrivial blocks, or one is from a component of $C(G)$
and the other from a nontrivial block.

Consider a shortest $x$ -- $y$ path $P$ in $G$.
Let $v_1,\dots,v_{r-1}$ be all cut-vertices of $G$
contained in $P$, in this order.
Set $x=v_0$ and $y=v_r$.
The path $P$
goes through blocks $B_1,B_2\dots,B_r$ indicated by the vertices
$v_0$ and $v_1$, $v_1$ and $v_2$, $\dots$, $v_{r-1}$ and $v_r$, respectively.
At least one of the blocks are nontrivial.
If $P$ must go through the edge $e_0$,
then in each block $B_i, 1\leq i\leq r$, we choose
a monochromatic $v_{i-1}$ -- $v_i$ path.
The path concatenated of the above monochromatic paths is a desired one,
since $h(G)$ only appears once.
Otherwise,
we consider the first nontrivial block $B_i,i\in\{1,\dots,r\}$.
In it, we choose a conflict-free $v_{i-1}$ -- $v_i$ path
going through the edge of $B_i$ colored with $h(G)$.
Then in the remaining blocks $B_j,j\in\{1,\dots,r\}\setminus \{i\}$,
we choose a monochromatic $v_{j-1}$ -- $v_j$ path.
The searched conflict-free $x$ -- $y$ path is then concatenated of these
above paths. The resulting $x$ -- $y$ path contains only one edge
assigned $h(G)$. Combining the fact $cfc(G)\geq h(G)$,
we have $cfc(G)=h(G)$ in this case.

Therefore, from above, it is easy to see that there does not exist the case simultaneously satisfying
$cfc(G)=h(G)+1$ and there is only one
component of $C(G)$ whose conflict-free connection number
attains $h(G)$.

In contrast, if $cfc(G)=h(G)$, there is only one
component of $C(G)$ whose conflict-free connection number
attains $h(G)$. Otherwise, $cfc(G)=h(G)+1$.

The result thus follows.
\end{proof}

As a byproduct, we can immediately get the value of
the conflict-free connection number
of a connected claw-free graph.
Before it, we state a structure theorem concerning
a connected claw-free graph.
Notice that a complete graph is claw-free.
Recall that for a connected claw-free graph $G$,
each component of $C(G)$ is a cut-path of $G$.
Let $p$ be the length of a longest
cut-path of $G$.

\begin{thm}\label{thm8}
Let $G$ be a connected claw-free graph.
Then $G$ must belong to one of the following four cases:

\ \ $i)$ $G$ is complete;

\ $ii)$ $G$ is noncomplete and~$2$-edge-connected;

$iii)$ $C(G)$ has at least two components $K$ satisfying $cfc(K)=\lceil log_2 (p+1)\rceil$;

\ $iv)$ $C(G)$ has only one component $K$ satisfying $cfc(K)=\lceil log_2 (p+1)\rceil$.
\end{thm}
\begin{proof} There are two cases according to
whether $G$ has a cut-edge or not.
If $G$ has no cut-edge, we can distinct into two subcases
according to whether $G$ is complete or not.
If $G$ has a cut-edge, then we distinct into two subcases
according to whether $C(G)$ has only one component $K$ satisfying $cfc(K)=\lceil log_2 (p+1)\rceil$ or not.
Thus, a connected claw-free graph $G$
must be in one of the above four subcases.
\end{proof}

According to Lemma~\ref{lem4},
Theorems~\ref{thm3},~\ref{thm7} and \ref{thm8},
we get the following result.

\begin{thm}\label{thm9}
Let $G$ be a connected claw-free graph of order~$n\geq2$.
Then we have

 \ \ $i)$ $cfc(G)=1$ if $G$ is complete;

 \ $ii)$ $cfc(G)=2$ if $G$ is noncomplete and $2$-edge-connected,
 or $p=1$ and $n\geq3$;

$iii)$ $cfc(G)=\lceil log_2 (p+1)\rceil +1$, if $C(G)$
has at least two components $K$ satisfying $cfc(K)=\lceil log_2 (p+1)\rceil$;
otherwise, $cfc(G)=\lceil log_2 (p+1)\rceil$, where $p\geq2$.
\end{thm}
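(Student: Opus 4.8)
The plan is to read off the value of $cfc(G)$ from the four-way structural partition supplied by Theorem~\ref{thm8}, translating each structural case into the corresponding connection number by invoking results already established. Since Theorem~\ref{thm8} guarantees that every connected claw-free $G$ falls into exactly one of its four cases, it suffices to compute $cfc(G)$ in each case separately and check that the four answers assemble into the three-part formula of the statement.

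The first two cases are immediate. If $G$ is complete, then $cfc(G)=1$ by Proposition~\ref{pro1}$(i)$, giving part~$i)$. For part~$ii)$ there are two subcases. If $G$ is noncomplete and $2$-edge-connected, then $cfc(G)=2$ directly by Theorem~\ref{thm4}. If instead $p=1$ and $n\geq 3$, then by Lemma~\ref{lem4} every component of $C(G)$ is a cut-path, and $p=1$ forces each such cut-path to have length~$1$, i.e.\ to be a component of order~$2$; hence $C(G)$ is a linear forest all of whose components have order~$2$, and Theorem~\ref{thm3} yields $cfc(G)=2$.

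The substantive case is~$iii)$, where $p\geq 2$. First I would identify $h(G)$ explicitly. By Lemma~\ref{lem4} each component $K$ of $C(G)$ is a cut-path, so $K\cong P_{\ell+1}$ for its length $\ell$, and Theorem~\ref{thm1} gives $cfc(K)=\lceil log_2(\ell+1)\rceil$. Because $\ell\mapsto\lceil log_2(\ell+1)\rceil$ is nondecreasing, the maximum over all components is attained at a longest cut-path, so $h(G)=\lceil log_2(p+1)\rceil$; moreover $p\geq 2$ forces $p+1\geq 3$ and hence $h(G)\geq 2$, which is exactly the hypothesis required to apply Theorem~\ref{thm7}. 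Feeding this into Theorem~\ref{thm7}: if at least two components $K$ of $C(G)$ satisfy $cfc(K)=h(G)=\lceil log_2(p+1)\rceil$, then $cfc(G)=h(G)+1=\lceil log_2(p+1)\rceil+1$; otherwise exactly one component attains $h(G)$ and $cfc(G)=h(G)=\lceil log_2(p+1)\rceil$. This reproduces part~$iii)$, completing the case analysis.

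Since none of the steps involves a genuinely new construction, there is no serious obstacle; the only point demanding care is the bookkeeping of the threshold value of $p$. One must verify that $p\geq 2$ really does guarantee $h(G)\geq 2$, so that Theorem~\ref{thm7} is legitimately applicable, and conversely that the boundary regime $h(G)=1$ (equivalently $p=1$) is fully absorbed into part~$ii)$ through Theorem~\ref{thm3}. Confirming that these two ranges of $p$ partition cleanly, with neither overlap nor gap against the $2$-edge-connected subcase of part~$ii)$, is the main thing I would want to nail down before declaring the synthesis complete.
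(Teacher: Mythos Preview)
Your proposal is correct and follows essentially the same route as the paper, which simply states that the result follows from Lemma~\ref{lem4} together with Theorems~\ref{thm3},~\ref{thm7} and~\ref{thm8}. You have merely made the implicit steps explicit---invoking Proposition~\ref{pro1}, Theorem~\ref{thm4}, and Theorem~\ref{thm1} where needed and verifying the threshold $h(G)\geq 2$ for the applicability of Theorem~\ref{thm7}---so there is no substantive difference in strategy.
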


Since line graphs are claw-free, from Theorems~\ref{thm8} and~\ref{thm9} we immediately get the following result.

\begin{cor}\label{cor2}
Let $G$ be a connected line graph of order $n\geq 2$.
Then we have

 \ \ $i)$ $cfc(G)=1$ if $G$ is complete;

 \ $ii)$ $cfc(G)=2$ if $G$ is noncomplete and $2$-edge-connected,
 or $p=1$ and $n\geq3$;

$iii)$ $cfc(G)=\lceil log_2 (p+1)\rceil +1$, if $C(G)$
has at least two components $K$ satisfying $cfc(K)=\lceil log_2 (p+1)\rceil$;
otherwise, $cfc(G)=\lceil log_2 (p+1)\rceil$, where $p\geq 2$.
\end{cor}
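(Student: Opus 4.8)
The plan is to deduce this statement directly from Theorems~\ref{thm8} and~\ref{thm9}, using the single structural fact, recalled in the introduction (see~\cite{Beineke} or~\cite{Line}), that every line graph is claw-free. A connected line graph $G$ is therefore in particular a connected claw-free graph, so it satisfies the hypotheses of both theorems and the entire case analysis and value formula established there transfer verbatim. Essentially no new argument is required; the work is only to check that each case of Theorem~\ref{thm9} matches the corresponding case of the corollary.

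First I would invoke claw-freeness to guarantee, via Lemma~\ref{lem4} (equivalently Corollary~\ref{cor1}), that each component of $C(G)$ is a cut-path of $G$. This ensures that the parameter $p$, the length of a longest cut-path, is well defined, so that the quantities $\lceil \log_2(p+1)\rceil$ appearing in the statement make sense. Since a cut-path of length $p$ is a copy of $P_{p+1}$, Theorem~\ref{thm1} gives $cfc(P_{p+1})=\lceil \log_2(p+1)\rceil$, and hence $h(G)=\lceil \log_2(p+1)\rceil$ whenever $p\geq 2$; this identifies the $h(G)$ used implicitly in Theorem~\ref{thm9} with the expression written in the corollary. Next I would run through the four mutually exclusive and exhaustive cases supplied by Theorem~\ref{thm8} and read off, in each, the value furnished by Theorem~\ref{thm9}: value $1$ in the complete case; value $2$ in the noncomplete $2$-edge-connected case as well as when $p=1$ with $n\geq 3$ (the latter handled by Theorem~\ref{thm3}); and value $\lceil \log_2(p+1)\rceil+1$ or $\lceil \log_2(p+1)\rceil$ according to whether at least two, or exactly one, components of $C(G)$ attain the maximum $cfc(K)=\lceil \log_2(p+1)\rceil$, for $p\geq 2$.

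Since the corollary is a pure specialization of an already-proved theorem, I do not expect any genuine obstacle. The only points meriting care are bookkeeping ones: verifying that the boundary subcase $p=1$, $n\geq 3$ is correctly absorbed into item~$ii)$ rather than item~$iii)$, and confirming that the dichotomy ``at least two components'' versus ``exactly one component'' attaining $h(G)$ lines up precisely with the two alternatives in item~$iii)$. With these checks made, the statement follows immediately.
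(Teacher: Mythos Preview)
Your proposal is correct and mirrors the paper's own justification: the paper simply notes that line graphs are claw-free and obtains the corollary immediately from Theorems~\ref{thm8} and~\ref{thm9}. Your additional bookkeeping (identifying $h(G)=\lceil \log_2(p+1)\rceil$ via Theorem~\ref{thm1} and checking that the $p=1$ subcase is absorbed into item~$ii)$) is a faithful elaboration of exactly this specialization.
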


Next, for a connected graph $G$ and a positive integer $k$, we compare $cfc(L^{k+1}(G))$ and $cfc(L^k(G))$.
For almost all cases, we find that $cfc(L^{k+1}(G))\leq cfc(L^k(G))$. However, note that if $G$ is a complete graph of order $n\geq 4$,
then $L(G)$ is noncomplete,
since there exist two nonadjacent edges in $G$.
In this case, we have $cfc(L(G))\geq 2 > 1 = cfc(G)$.
So we first characterize the connected graphs whose line graphs
are complete graphs.

\begin{lem}\label{lem5}
The line graph $L(G)$ of a connected graph $G$ is complete if and only if
$G$ is isomorphic to a star or $K_3$.
\end{lem}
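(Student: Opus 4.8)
The plan is to prove both directions of the equivalence, with the interesting content lying in the forward direction. The claim is that $L(G)$ is complete if and only if $G \cong K_{1,r}$ (a star) or $G \cong K_3$.

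First I would handle the easy backward direction. If $G \cong K_{1,r}$ is a star, then all $r$ edges share the central vertex, so any two edges of $G$ are adjacent; hence every two vertices of $L(G)$ are adjacent, and $L(G) \cong K_r$ is complete. If $G \cong K_3$, then it has three edges, each pair of which shares a vertex, so again $L(G) \cong K_3$ is complete. This disposes of the ``if'' part in a couple of lines.

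For the forward direction, I would argue by contraposition or directly via structural analysis: assume $L(G)$ is complete and deduce the structure of $G$. Completeness of $L(G)$ means precisely that \emph{every} pair of distinct edges of $G$ is adjacent, i.e. any two edges share a common endpoint. So the real combinatorial task is to classify all connected graphs in which any two edges meet. The key step is the following observation: if two edges $e_1 = uv$ and $e_2 = uw$ share vertex $u$ but $v \neq w$, then a third edge $e_3$ must be adjacent to both; checking the cases shows $e_3$ either passes through $u$ (feeding the star picture) or is the edge $vw$ (forcing a triangle). I would formalize this by supposing $G$ is not a star and producing the triangle: since $G$ is not a star, it has two edges with no common endpoint—no, more carefully, if $G$ is not a star then either there exist independent edges (immediately violating the ``all pairs adjacent'' property, a contradiction) or all edges pairwise meet but not all at one vertex. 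In the latter situation I take three mutually adjacent edges that do not share a single common vertex and show they must form a $K_3$; then any further edge would have to meet all three sides of the triangle while being distinct, which forces $G = K_3$ exactly.

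The main obstacle, and the step deserving the most care, is the case analysis classifying graphs in which every two edges are adjacent. The cleanest route is the dichotomy: either all edges pass through one common vertex (giving the star $K_{1,r}$), or they do not, in which case I must show the only possibility is the triangle. The argument for the triangle case is where one can slip: one must verify that once three edges form a triangle on vertices $\{a,b,c\}$, no additional edge can be added without creating a pair of nonadjacent edges, since a fourth edge incident to, say, $a$ and a new vertex $d$ would be nonadjacent to the edge $bc$. This pins $G$ down to exactly $K_3$ and completes the classification, yielding the stated equivalence.
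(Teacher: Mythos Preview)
Your argument is correct, but it takes a genuinely different route from the paper. The paper dispatches the forward direction in one line by invoking Whitney's isomorphism theorem: since $L(K_{1,m})\cong K_m$, if $L(G)$ is complete then $L(G)\cong L(K_{1,m})$ for the appropriate $m$, and Whitney's theorem forces $G\cong K_{1,m}$ or, in the sole exceptional case $m=3$, $G\cong K_3$. You instead give a direct, elementary classification of connected graphs in which every two edges are adjacent, splitting into the ``all edges through one vertex'' case (star) and the ``no common vertex'' case, where you pin down a triangle and rule out any further edge. Your approach is self-contained and avoids importing a nontrivial external theorem; the paper's approach is terser but leans on heavier machinery. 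One small polish: in the triangle case you should make explicit why three pairwise-adjacent edges with no common vertex \emph{must} exist once the star case is excluded (pick $e_1=uv$; since not every edge goes through $u$ there is some $e_2=vw$, and since not every edge goes through $v$ there is some $e_3=ux$; adjacency of $e_2$ and $e_3$ forces $w=x$), but this is exactly the mechanism you sketch and it goes through cleanly.
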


\begin{proof} If $G$ is isomorphic to a star
or $K_3$, then obviously $L(G)$ is complete.

Conversely, suppose $L(G)$ is complete. From Whitney
isomorphism theorem of line graphs (see \cite{Line}), i.e., two graphs
$H$ and $H'$ have isomorphic line graphs if and only if
$H$ and $H'$ are isomorphic, or one of them is isomorphic to the claw
$K_{1,3}$ and the other is isomorphic to the triangle $K_3$, we immediately
get that $G$ is isomorphic to a star or $K_3$.
\end{proof}

By Lemma~\ref{lem5} we get the following result.
\begin{thm}
Let $G$ be a connected graph which is not isomorphic
to a star
of order at least~$5$, and $k$ be an arbitrary positive integer.
Then we have $cfc(L^{k+1}(G)) \leq cfc(L^k(G))$.
\end{thm}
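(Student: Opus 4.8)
The plan is to reduce the whole statement to a single application of the line-graph operator on a line graph, and then to read off the two sides directly from Corollary~\ref{cor2}. Fix $k\geq 1$ and set $H=L^k(G)$. Since $k\geq 1$, the graph $H$ is a connected line graph, hence claw-free, and so is $L(H)=L^{k+1}(G)$; thus both $cfc(H)$ and $cfc(L(H))$ are governed by Corollary~\ref{cor2}, and the theorem becomes the assertion $cfc(L(H))\leq cfc(H)$.

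First I would isolate the only mechanism through which $cfc$ can strictly increase under one line-graph step. Inspecting Corollary~\ref{cor2}, an increase is possible only when $cfc(H)=1$, i.e.\ when $H$ is complete but $L(H)$ is not; by Lemma~\ref{lem5} this forces $H$ to be a star or $K_3$, and among complete graphs only $K_n$ with $n\geq 4$ has a noncomplete line graph. I would show this cannot occur under our hypothesis. If $k\geq 2$, then $H=L(L^{k-1}(G))=K_n$ with $n\geq 4$ would, by Lemma~\ref{lem5}, force $L^{k-1}(G)$ to be a star of order at least~$5$, which is impossible because $L^{k-1}(G)$ is a line graph (hence claw-free) for $k-1\geq 1$. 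If $k=1$, then $H=L(G)=K_n$ with $n\geq 4$ would, again by Lemma~\ref{lem5}, force $G$ to be a star of order at least~$5$, which is excluded by assumption. Hence $H$ is never $K_n$ with $n\geq 4$.

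With that case removed, I would run through the regimes of Corollary~\ref{cor2} for $H$. If $H$ is complete it is $K_1$, $K_2$, or $K_3$, and $L(H)$ is trivial, $K_1$, or $K_3$ respectively, so $cfc(L(H))\leq cfc(H)$ by direct inspection. If $H$ is noncomplete and $2$-edge-connected, then $cfc(H)=2$, while $L(H)$ is $2$-connected by Lemma~\ref{lem3}, whence $cfc(L(H))\leq 2$ by Theorem~\ref{thm2} (or $1$ if $L(H)$ is complete). If $H$ has a cut-edge with longest cut-path of length $p=1$, then, exactly as in the proof of Theorem~\ref{thm6}, the absence of any cut-path with an internal vertex makes $L(H)$ $2$-edge-connected, so $cfc(L(H))\leq 2=cfc(H)$. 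The remaining regime $p\geq 2$ carries the real content.

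For $p\geq 2$ the key claim I need is a transfer rule for cut-paths under $L$: every cut-path of length $\ell\geq 2$ in $H$ produces a cut-path of length $\ell-1$ in $L(H)$, every cut-path of length $1$ contributes no cut-edge to $L(H)$, and no other cut-edges are created; consequently the longest cut-path length of $L(H)$ is $p-1$ and the number $m$ of longest cut-paths is preserved. Establishing this transfer rule is what I expect to be the main obstacle, and I would argue it locally: the edges of a cut-path $u_0u_1\cdots u_\ell$ form a path $e_1\cdots e_\ell$ in $L(H)$, the interior vertices $e_2,\dots,e_{\ell-1}$ inherit degree~$2$ from the degree-$2$ interior vertices $u_1,\dots,u_{\ell-1}$ of $H$, and since $L(P_m)=P_{m-1}$ the associated component of $C(L(H))$ has length $\ell-1$, while the vanishing of single cut-edges and the $2$-connectivity of nontrivial blocks (Lemma~\ref{lem3}) rule out spurious cut-edges. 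Granting the claim, I finish by comparing the Corollary~\ref{cor2} formulas. If $p\geq 3$, then $L(H)$ again lies in regime~(iii) with parameter $p-1\geq 2$ and the same $m$, so $cfc(L(H))$ equals $\lceil\log_2 p\rceil$ or $\lceil\log_2 p\rceil+1$ according as $m=1$ or $m\geq 2$, which is at most $\lceil\log_2(p+1)\rceil$ or $\lceil\log_2(p+1)\rceil+1=cfc(H)$ by monotonicity of $x\mapsto\lceil\log_2 x\rceil$. If $p=2$, then every cut-path of $L(H)$ has length $1$ (or $L(H)$ is complete), so $cfc(L(H))\leq 2$, whereas $cfc(H)\in\{\lceil\log_2 3\rceil,\lceil\log_2 3\rceil+1\}=\{2,3\}$; in either case $cfc(L(H))\leq cfc(H)$. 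This exhausts all cases and completes the argument.
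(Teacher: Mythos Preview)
Your overall architecture is sound and close to the paper's: both arguments rest on the transfer rule for cut-paths under $L$ and then invoke the exact formula (Theorem~\ref{thm7}/Corollary~\ref{cor2}). The paper runs a short contradiction argument, while you do a direct case split; that difference is cosmetic. However, your $p\geq 3$ case has a genuine gap.

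You define $m$ as the number of \emph{longest} cut-paths and claim it is preserved by $L$, which is correct. But Corollary~\ref{cor2}(iii) does not decide $cfc$ in terms of $m$; it asks how many components $K$ of $C(\cdot)$ satisfy $cfc(K)=h(\cdot)$, and that count can exceed $m$. Concretely, take $H$ with exactly two cut-paths, of lengths $4$ and $3$. Then $m=1$, $h(H)=\lceil\log_2 5\rceil=3$, and only the length-$4$ path attains cfc $3$, so $cfc(H)=3$. In $L(H)$ the cut-paths have lengths $3$ and $2$, with cfc $\lceil\log_2 4\rceil=2$ and $\lceil\log_2 3\rceil=2$; thus \emph{two} components attain $h(L(H))=2$, and $cfc(L(H))=3$, not the value $\lceil\log_2 p\rceil=2$ your formula predicts. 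Your inequality still happens to hold here ($3\leq 3$), but your computation of $cfc(L(H))$ is wrong, so the argument as written does not establish the bound.

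The fix is short and is exactly what the paper's contradiction proof does implicitly: separate the cases $h(L(H))<h(H)$ and $h(L(H))=h(H)$. In the first case Theorem~\ref{thm5} already gives $cfc(L(H))\leq h(L(H))+1\leq h(H)\leq cfc(H)$. In the second case, observe that every component of $C(L(H))$ with cfc equal to $q:=h(L(H))=h(H)$ comes from a strictly longer component of $C(H)$, which therefore also has cfc at least $q$, hence exactly $q$; this gives an injection from max-cfc components of $C(L(H))$ into those of $C(H)$, so if $C(H)$ has only one, so does $C(L(H))$. That replaces your ``same $m$'' step and closes the gap.
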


\begin{proof}
To the contrary, we suppose that there exists a positive integer $k_0$ such that $cfc(L^{k_0+1}(G))>cfc(L^{k_0}(G))$.
We first claim that $L^{k_0+1}(G)$ has at least one cut-edge.
Otherwise, by Proposition~\ref{pro1} and Theorem~\ref{thm4},
we have $cfc(L^{k_0+1}(G))\leq 2$.
If $L^{k_0}(G)$ is complete, then it follows from Lemma~\ref{lem5}
that $L^{k_0}(G)\cong C_3$.
Then $L^{k_0+1}(G)$
is also complete, implying $cfc(L^{k_0+1}(G))=cfc(L^{k_0}(G))=1$.
If $L^{k_0}(G)$ is noncomplete,
then by Proposition~\ref{pro1}, we have $cfc(L^{k_0}(G))\geq 2$;
clearly, $cfc(L^{k_0+1}(G))\leq cfc(L^{k_0}(G))$ in this case.
In both cases, we have $cfc(L^{k_0+1}(G)) \leq cfc(L^{k_0}(G))$,
a contradiction.

From Corollary~\ref{cor1},
it follows that for a positive integer $i$, each component of $C(L^i(G))$
is a cut-path of $L^i(G)$.
Let $p_i$ be the length
of a largest path of $C(L^i(G))$.
Then we have $p_{i+1}=p_i-1$, meaning
$h(L^{i+1}(G))\leq h(L^i(G))$.
Set $h(L^{k_0}(G))=q$.
Since $L^{k_0+1}(G)$ has a cut-edge,
we deduce that $q\geq2$.
And we know $h(L^{k_0+1}(G))=q-1$ or $h(L^{k_0+1}(G))=q$. If $h(L^{k_0+1}(G))=q-1$, by
Theorem~\ref{thm5}, we have $q-1\leq cfc(L^{k_0+1}(G))\leq q$.
For the same reason, $q\leq cfc(L^{k_0}(G))\leq q+1$.
Thus, it makes a contradiction to the supposition that $cfc(L^{k_0+1}(G))>cfc(L^{k_0}(G))$.

Then we have $h(L^{k_0+1}(G))=q$,
$cfc(L^{k_0+1}(G))=q+1$ and $cfc(L^{k_0}(G))=q$.
By Theorem~\ref{thm7}, there are at least two components
of $C(L^{k_0+1}(G))$ whose conflict-free connection numbers are $q$,
and there is only one component of $C(L^{k_0}(G))$
whose conflict-free connection number is $q$.
Since every cut-path of $L^{k_0+1}(G)$ corresponds
to a cut-path of $L^{k_0}(G)$,
a cut-path of $L^{k_0+1}(G)$
is shorter
than its corresponding cut-path of $L^{k_0}(G)$.
So there is at most one component of $C(L^{k_0+1}(G))$
whose conflict-free connection number is $q$, a contradiction.
Thus, we have $cfc(L^{k+1}(G))\leq cfc(L^k(G))$
for any positive integer $k$.
\end{proof}

If $G$ is a star of order at least~$5$,
then $L^i(G) \ (i\geq 2)$ are noncomplete and~$2$-connected.
The following result is easily obtained according to Theorem~\ref{thm2}.
\begin{thm}
Let $G$ be isomorphic to a star
of order at least~$5$, and $k\geq 2$ be a positive integer.
Then we have $cfc(L^{k+1}(G)) = cfc(L^k(G))$.
\end{thm}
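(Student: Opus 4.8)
The plan is to reduce everything to Theorem~\ref{thm2}, which asserts that any noncomplete $2$-connected graph has conflict-free connection number exactly $2$. Thus it suffices to verify that for a star $G \cong K_{1,r}$ with $r \geq 4$, every iterated line graph $L^i(G)$ with $i \geq 2$ is simultaneously noncomplete and $2$-connected. Granting this, both $L^k(G)$ and $L^{k+1}(G)$ fall under Theorem~\ref{thm2} whenever $k \geq 2$, so $cfc(L^{k+1}(G)) = 2 = cfc(L^k(G))$ and the claimed equality is immediate.

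To establish the structural claim, I would first compute the base of the tower. Since all $r$ edges of $K_{1,r}$ share the center, they are pairwise adjacent, so $L(G) = L(K_{1,r}) = K_r$. For $r \geq 4$ the complete graph $K_r$ is $2$-edge-connected, and applying Lemma~\ref{lem3} gives that $L^2(G) = L(K_r)$ is $2$-connected. It is also noncomplete: in $K_r$ the two edges $\{a,b\}$ and $\{c,d\}$ with four distinct endpoints are nonadjacent, so the corresponding vertices of $L(K_r)$ are nonadjacent.

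For the inductive step I would argue that both properties propagate. If $L^{i-1}(G)$ is $2$-connected, then by Whitney's inequality $\kappa \leq \lambda$ it is also $2$-edge-connected, so Lemma~\ref{lem3} forces $L^i(G)$ to be $2$-connected as well; hence by induction $L^i(G)$ is $2$-connected for every $i \geq 2$. Noncompleteness propagates too: by Lemma~\ref{lem5}, the graph $L^i(G)$ could only be complete if $L^{i-1}(G)$ were a star or $K_3$. But for $i \geq 3$ the graph $L^{i-1}(G)$ is already noncomplete (ruling out $K_3$) and $2$-connected (a star has a cut-vertex and so is not $2$-connected), whence $L^i(G)$ is noncomplete. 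This closes the induction and yields exactly the property stated just before the theorem.

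I do not expect a genuine obstacle here: once the structural claim is in hand, the theorem is a direct citation of Theorem~\ref{thm2}. The only point that requires care is the base of the induction, namely confirming that passing from the star to $K_r$ and then to $L(K_r)$ lands us in the noncomplete $2$-connected regime; after that, Lemmas~\ref{lem3} and~\ref{lem5} keep us there for all larger $i$, and the value of $cfc$ is pinned to $2$ throughout.
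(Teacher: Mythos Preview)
Your proposal is correct and follows exactly the route the paper takes: the paper simply asserts (in one line before the theorem) that $L^i(G)$ is noncomplete and $2$-connected for all $i\geq 2$ and then invokes Theorem~\ref{thm2}. You have merely supplied the details the paper omits---computing $L(K_{1,r})=K_r$, using Lemma~\ref{lem3} for $2$-connectedness, and using Lemma~\ref{lem5} together with the induction hypothesis for noncompleteness---so there is nothing to add.
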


Combining the above two theorems, we get a main result of this section.

\begin{thm}
For an arbitrary connected graph $G$
and an arbitrary positive integer $k$, we always have $cfc(L^{k+1}(G))\leq cfc(L^k(G))$,
with only the exception that $G$ is isomorphic to a star of order at least~$5$
and $k=1$.
\end{thm}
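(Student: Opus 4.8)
The plan is to obtain this combined statement directly from the two preceding theorems by a case analysis on $G$, and then to verify that the stated exception is genuine by exhibiting a strict inequality there. First I would dispose of the generic case: if $G$ is not isomorphic to a star of order at least~$5$, then the first of the two preceding theorems applies verbatim and gives $cfc(L^{k+1}(G))\leq cfc(L^k(G))$ for every positive integer $k$, so nothing further is needed in this case.

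It remains to treat $G\cong K_{1,r}$ with $r\geq 4$. For $k\geq 2$ the second preceding theorem yields the equality $cfc(L^{k+1}(G))=cfc(L^k(G))$, which in particular implies the desired inequality; hence the only place where the inequality could fail is at $k=1$, and this is exactly the claimed exception. To confirm that it is a real exception rather than merely an uncovered case, I would compute both sides directly. Since $L(K_{1,r})\cong K_r$, Proposition~\ref{pro1}$(i)$ gives $cfc(L(G))=1$. Because $r\geq 4$, the graph $K_r$ contains two nonadjacent edges, so $L^2(G)\cong L(K_r)$ is noncomplete; and since $K_r$ is $2$-edge-connected, Lemma~\ref{lem3} shows $L(K_r)$ is $2$-connected, whence $cfc(L^2(G))=2$ by Theorem~\ref{thm2}. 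Thus $cfc(L^2(G))=2>1=cfc(L(G))$, so the inequality genuinely fails precisely when $G$ is a star of order at least~$5$ and $k=1$.

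The main obstacle — in fact the only point carrying any real content — is this last verification. One must identify $L(K_{1,r})\cong K_r$ and check that $L(K_r)$ is both noncomplete and $2$-connected, which is exactly where the hypothesis that the star has order at least~$5$ (equivalently $r\geq 4$) enters: for $r\leq 3$ the line graph $L(K_r)$ is itself complete, the strict inequality collapses, and the case is absorbed into the generic statement rather than forming a genuine exception. Everything else is bookkeeping that routes each $(G,k)$ into one of the two preceding theorems.
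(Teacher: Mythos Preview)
Your proposal is correct and follows exactly the route the paper takes: the paper simply writes ``Combining the above two theorems, we get a main result of this section,'' which is precisely your case split between $G$ not a star of order $\geq 5$ (first preceding theorem) and $G$ a star of order $\geq 5$ with $k\geq 2$ (second preceding theorem). Your additional verification that the exception at $k=1$ is genuine --- via $L(K_{1,r})\cong K_r$, Proposition~\ref{pro1}, Lemma~\ref{lem3}, and Theorem~\ref{thm2} --- is not spelled out by the paper at this point (it is implicit in the surrounding discussion and in the later Lemma~\ref{lem8}), so if anything you are slightly more thorough here.
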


From Theorem~\ref{thm6},
we know the existence of a positive integer $k$ such that $cfc(L^k(G))\leq 2$. From Proposition \ref{pro1} we know that only complete graphs
have the cfc-value equal to 1. So, the iterated line graph $L^k(G)$ of a connected graph $G$ has a cfc-value 1 if and only if $G$ is complete
for $k=0$ from Proposition \ref{pro1}, or $G$ is isomorphic to
a star of order at least~$3$
for $k=1$ from Lemma \ref{lem5}, or $G$ is $K_3$ for all $k\geq 1$, 
or $G$ is a path of order $n\geq 4$ for $k=n-2$.
Next, we want to find the smallest nonnegative
integer $k_0$ such that $cfc(L^{k_0}(G))=2$.
Let $k$ an arbitrary
nonnegative integer.
Based on Proposition~\ref{pro1}, Theorems~\ref{thm1}~through~\ref{thm4},
Lemmas~\ref{lem3}~and~\ref{lem5},
we begin with the investigation of
the exact value of $cfc(L^k(G))$
when $G$ is a path, a complete graph, a star, or
a~noncomplete $2$-edge-connected graph.

\begin{lem}\label{lem6}
Let $n\geq 2$ be a positive integer.
Then $cfc(L^k(P_n))=\lceil log_2(n-k)\rceil$
if $k<n-1$;
otherwise, $cfc(L^k(P_n))=0$.
\end{lem}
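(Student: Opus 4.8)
The plan is to reduce the entire computation to Theorem~\ref{thm1} by exploiting the elementary fact that applying the line-graph operator to a path merely shortens it. First I would record the structural identity $L(P_m)=P_{m-1}$ for every integer $m\geq 1$, under the conventions that $P_1$ is a single vertex and $P_0$ is the empty graph. This is immediate: a path $P_m$ on $m$ vertices has exactly $m-1$ edges which, listed in order along the path, are pairwise adjacent precisely when consecutive; hence the vertices of $L(P_m)$ (namely these edges) form a path, so $L(P_m)\cong P_{m-1}$. This is the same identity already invoked in the proof of Theorem~\ref{thm6}.

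Iterating, I would then prove by a one-line induction on $k$ that $L^k(P_n)\cong P_{n-k}$ whenever $n-k\geq 0$, and that $L^k(P_n)$ is the empty graph once $n-k<0$. The inductive step is just $L^{k+1}(P_n)=L\bigl(L^k(P_n)\bigr)=L(P_{n-k})=P_{n-k-1}$, using the base identity.

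With $L^k(P_n)\cong P_{n-k}$ established, the conclusion follows by splitting on the value of $n-k$, and this split is exactly the dichotomy $k<n-1$ versus $k\geq n-1$. If $k<n-1$, i.e. $n-k\geq 2$, then $P_{n-k}$ is a genuine path on at least two vertices and Theorem~\ref{thm1} yields $cfc(L^k(P_n))=\lceil \log_2(n-k)\rceil$. If instead $k\geq n-1$, i.e. $n-k\leq 1$, then $P_{n-k}$ is either a single vertex (when $n-k=1$) or the empty graph (when $n-k\leq 0$), so by the convention stated just before Proposition~\ref{pro1} we get $cfc(L^k(P_n))=0$.

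The argument is essentially bookkeeping, so there is no substantial obstacle; the only point demanding care—the ``hard part,'' such as it is—is the correct treatment of the degenerate endpoints, namely verifying that the line-graph operator does collapse $P_1$ to the empty graph and confirming that the formula $\lceil \log_2(n-k)\rceil$ fails exactly on the range $n-k\leq 1$, which is precisely why the stated result must be phrased in two cases.
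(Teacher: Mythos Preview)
Your proposal is correct and matches the paper's intended argument: the paper states Lemma~\ref{lem6} without an explicit proof, indicating only that it follows from Proposition~\ref{pro1} and Theorems~\ref{thm1}--\ref{thm4}, and your reduction via $L^k(P_n)\cong P_{n-k}$ (already used in the proof of Theorem~\ref{thm6}) together with Theorem~\ref{thm1} and the convention on degenerate graphs is exactly how that deduction goes.
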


\begin{lem}\label{lem7}
Let $G$ be a complete graph of order $n\geq 3$.
Then $cfc(L^k(G))=1$ for any nonnegative integer $k$
if $n=3$;
$cfc(G)=1$ and $cfc(L^k(G))=2$ for any positive integer $k$
if $n\geq 4$.
\end{lem}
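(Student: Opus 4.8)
The plan is to split into the two regimes $n=3$ and $n\ge 4$, and in the harder regime to propagate $2$-connectivity together with noncompleteness up the line-graph tower. I would first dispose of $n=3$ by the direct observation that $L(K_3)\cong K_3$: the three edges of a triangle pairwise share a vertex, so any two vertices of $L(K_3)$ are adjacent, i.e.\ $L(K_3)$ is complete on three vertices. Iterating gives $L^k(K_3)\cong K_3$ for every $k\ge 0$, and since $K_3$ is complete, Proposition~\ref{pro1}$(i)$ yields $cfc(L^k(K_3))=1$ for all $k\ge 0$.

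For $n\ge 4$, the equality $cfc(G)=cfc(K_n)=1$ is immediate from Proposition~\ref{pro1}$(i)$, so the substance is to show $cfc(L^k(K_n))=2$ for every $k\ge 1$. I would obtain this from Theorem~\ref{thm2} once the following claim is established by induction on $k$: \emph{for $n\ge 4$ and every $k\ge 1$, the graph $L^k(K_n)$ is noncomplete and $2$-connected.} For the base case $k=1$, note that $K_n$ is $(n-1)$-edge-connected and hence $2$-edge-connected, so Lemma~\ref{lem3} gives that $L(K_n)$ is $2$-connected; moreover, since $K_n$ is neither a star nor $K_3$ when $n\ge 4$, Lemma~\ref{lem5} shows that $L(K_n)$ is noncomplete. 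For the inductive step, suppose $L^k(K_n)$ is noncomplete and $2$-connected. Then $L^k(K_n)$ is in particular $2$-edge-connected, so $L^{k+1}(K_n)$ is $2$-connected by Lemma~\ref{lem3}. To see that $L^{k+1}(K_n)$ is noncomplete, I would argue by contradiction using Lemma~\ref{lem5}: if $L^{k+1}(K_n)$ were complete, then $L^k(K_n)$ would be a star or $K_3$; but $K_3$ and $K_2$ are complete (contradicting the noncompleteness of $L^k(K_n)$), while every star $K_{1,r}$ with $r\ge 2$ has its center as a cut-vertex (contradicting $2$-connectivity). This completes the induction, and Theorem~\ref{thm2} then gives $cfc(L^k(K_n))=2$ for all $k\ge 1$.

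The main point to get right is the inductive step, where noncompleteness and $2$-connectivity must be carried \emph{together}, since each is needed to apply the other half of the argument. Indeed, Lemma~\ref{lem5} forbids completeness of $L^{k+1}(K_n)$ only by forcing $L^k(K_n)$ to be a star or $K_3$, and it is precisely $2$-connectivity (to rule out the stars) together with the inductive noncompleteness (to rule out $K_3$ and $K_2$) that eliminates these possibilities. Maintaining both invariants simultaneously in the induction hypothesis is what prevents the tower from collapsing back to a complete graph, which is the only mechanism by which the $cfc$-value could drop below~$2$.
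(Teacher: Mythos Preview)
Your proof is correct and follows precisely the route the paper indicates: the paper states Lemma~\ref{lem7} without an explicit proof, noting only that it follows from Proposition~\ref{pro1}, Theorems~\ref{thm1}--\ref{thm4}, and Lemmas~\ref{lem3} and~\ref{lem5}, and your argument uses exactly Proposition~\ref{pro1}$(i)$, Theorem~\ref{thm2}, Lemma~\ref{lem3}, and Lemma~\ref{lem5} in the natural way. The inductive maintenance of both noncompleteness and $2$-connectivity is the right mechanism, and your case analysis ruling out stars and $K_3$ (and $K_2=K_{1,1}$) is complete.
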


\begin{lem}\label{lem8}
Let $G$ be a star of order $n\geq 4$.
Then $cfc(G)=n-1$; $cfc(L(G))=1$;
for a positive integer $k\geq 2$,
$cfc(L^k(G))=1$ if $n=4$,
$cfc(L^k(G))=2$ if $n\geq 5$.
\end{lem}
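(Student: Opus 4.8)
The plan is to verify the four assertions in turn, exploiting the very simple structure of a star and the fact that its line graph is complete.

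First I would establish $cfc(G)=n-1$ for $G\cong K_{1,n-1}$. The key observation is that in a star every two distinct leaves are joined by a unique path, namely the two-edge path through the center. A two-edge path is conflict-free precisely when its two edges receive distinct colors, since otherwise the only color used appears twice, so no color is used exactly once. Hence a coloring is a $CFC$-coloring if and only if every pair of edges incident to the center gets distinct colors; as all $n-1$ edges share the center, this forces all of them to be colored differently. Thus at least $n-1$ colors are needed, and assigning all edges distinct colors clearly works, giving $cfc(G)=n-1$, consistent with the upper bound in Proposition~\ref{pro1}$(iii)$.

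Next, for $cfc(L(G))=1$, I would note that all $n-1$ edges of $K_{1,n-1}$ are pairwise adjacent, since they all meet at the center, so they form pairwise adjacent vertices in $L(G)$; that is, $L(G)\cong K_{n-1}$. By Proposition~\ref{pro1}$(i)$ a complete graph has conflict-free connection number $1$, so $cfc(L(G))=1$. This also follows directly from Lemma~\ref{lem5}.

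Finally I would treat $k\geq 2$ by splitting on $n$. When $n=4$ we have $L(G)\cong K_3\cong C_3$; since $L(K_3)\cong K_3$, a trivial induction gives $L^k(G)\cong K_3$ for every $k\geq 1$, whence $cfc(L^k(G))=1$ by Proposition~\ref{pro1}$(i)$. When $n\geq 5$ we have $L(G)\cong K_{n-1}$ with $n-1\geq 4$, so $L^k(G)=L^{k-1}(K_{n-1})$, and applying Lemma~\ref{lem7} to the complete graph $K_{n-1}$ of order at least $4$ yields $cfc(L^k(G))=2$ for every $k-1\geq 1$, i.e.\ for every $k\geq 2$. Alternatively, one checks directly that $L^i(G)$ for $i\geq 2$ is noncomplete and $2$-connected and invokes Theorem~\ref{thm2}. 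The argument is essentially mechanical once the identification $L(K_{1,n-1})\cong K_{n-1}$ is made; the only point requiring a little care is the lower-bound half of the first assertion, where one must argue that sharing a color on two edges genuinely destroys conflict-freeness because the connecting path is forced and of length exactly two.
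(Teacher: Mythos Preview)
Your proposal is correct and follows exactly the route the paper intends: the paper states Lemma~\ref{lem8} without proof, indicating only that it (together with Lemmas~\ref{lem6}, \ref{lem7}, \ref{lem9}) follows from Proposition~\ref{pro1}, Theorems~\ref{thm1}--\ref{thm4}, and Lemmas~\ref{lem3}, \ref{lem5}, which is precisely the toolkit you invoke. Your direct argument for $cfc(K_{1,n-1})=n-1$ via the forced two-edge paths, the identification $L(K_{1,n-1})\cong K_{n-1}$, and the reduction of the $k\geq 2$ case to Lemma~\ref{lem7} (or equivalently to Theorem~\ref{thm2} via $2$-connectedness, as the paper also remarks earlier in Section~3) are all sound and faithfully fill in the details the paper leaves implicit.
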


\begin{lem}\label{lem9}
Let $G$ be a~noncomplete $2$-edge-connected graph of
order $n\geq 4$.
Then $cfc(L^k(G))=2$ for a nonnegative integer $k$.
\end{lem}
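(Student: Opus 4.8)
The plan is to show that the two properties powering Theorem~\ref{thm4}, namely being noncomplete and being $2$-edge-connected, are preserved under the line graph operator once the order is kept large enough, and then to invoke Theorem~\ref{thm4} at every level of the iteration. I would first dispose of the base case $k=0$: since $G$ is noncomplete and $2$-edge-connected, Theorem~\ref{thm4} gives $cfc(G)=2$ directly.

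The core of the argument is an inductive invariant, which I would phrase as: for every nonnegative integer $k$, the graph $L^k(G)$ is $2$-edge-connected and has order at least $4$. The inductive step rests on two ingredients. For $2$-edge-connectivity, Lemma~\ref{lem3} says that the line graph of a $2$-edge-connected graph is $2$-connected, hence $2$-edge-connected; so $L^{k+1}(G)=L(L^k(G))$ inherits $2$-edge-connectivity from $L^k(G)$. For the order bound, I would record the elementary fact that the order of $L(H)$ equals the number of edges of $H$, and that a $2$-edge-connected graph $H$ has minimum degree at least $2$, so that $2|E(H)|=\sum_{v}d_H(v)\geq 2|V(H)|$ forces $|E(H)|\geq|V(H)|$. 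Applying this to $H=L^k(G)$ keeps the order of $L^{k+1}(G)$ at least the order of $L^k(G)$, hence at least $4$.

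Once the invariant is in place, noncompleteness follows at each level from Lemma~\ref{lem5}: the graph $L^{k+1}(G)=L(L^k(G))$ is complete if and only if $L^k(G)$ is a star or $K_3$; but $L^k(G)$ is $2$-edge-connected, so it is not a star of order at least $3$ (every edge of such a star being a cut-edge), and it has order at least $4$, so it is not $K_3$. Hence $L^{k+1}(G)$ is noncomplete for every $k\geq 0$, and combined with the base hypothesis $L^k(G)$ is noncomplete for every $k\geq 0$. Feeding noncompleteness together with $2$-edge-connectivity into Theorem~\ref{thm4} yields $cfc(L^k(G))=2$ for every $k\geq 1$, which with the base case completes the proof.

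The step I expect to require the most care is the order bound, precisely because $K_3$ is simultaneously $2$-edge-connected and complete and satisfies $L(K_3)=K_3$; were the order ever to drop to $3$, the iteration could stabilize at a complete graph and the conclusion would fail. The inequality $|E(H)|\geq|V(H)|$ for minimum degree at least $2$ is exactly what forbids this collapse, so I would state it explicitly and verify it is used at every inductive step rather than only at the start.
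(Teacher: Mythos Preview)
Your proposal is correct and matches the paper's intended approach: the paper states Lemma~\ref{lem9} without a separate proof, citing exactly Proposition~\ref{pro1}, Theorems~\ref{thm1}--\ref{thm4}, and Lemmas~\ref{lem3} and~\ref{lem5} as the tools, and your argument uses precisely Theorem~\ref{thm4}, Lemma~\ref{lem3}, and Lemma~\ref{lem5} together with the elementary order bound $|E(H)|\geq|V(H)|$ to block the $K_3$ degeneration. You have supplied the details the paper leaves implicit, and your emphasis on keeping the order at least $4$ throughout the iteration is exactly the point that needs care.
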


%Let $n$ be the order of a connected graph.
%There is only one connected graph $G_0$ (of size~$4$)
%left when $n=4$, whose conflict-free connection number
%is~$2$. Then we have $cfc(L^k(G_0))=2$ for a nonnegative integer $k$.
%In the rest, we assume that $n\geq 5$.
Let $\mathcal{G}=\{G \ | \ G$ is a connected graph of order $n\geq4$, $G$ has
a cut-edge, $G$ is not a path or a star$\}$.
Except for the above four kinds of graphs in Lemmas \ref{lem6} through
\ref{lem9},
we know little about the exact values
of the conflict-free connection numbers of other connected graphs,
even for a general tree.
So for a graph $G\in \mathcal{G}$, it is difficult to give the value of
$cfc(L^k(G))$ when $k=0$.
However, based on Corollaries~\ref{cor1}~and~\ref{cor2},
we can give the value of $cfc(L^k(G))$ when $k\geq 1$.
Set $p_0$ be the length of a longest cut-path of $L(G)$,
and let $p_0=0$ if $L(G)$ is~$2$-edge-connected.

\begin{lem}\label{lem10}
Let $G\in \mathcal{G}$ and let $k$ be an arbitrary positive integer.
Then we have

\ $i)$ $cfc(L^k(G))=2$ always holds if $p_0\leq 1$
or there is only one component $K$ of $L(G)$
satisfying $cfc(K)=h(L(G))=2$;

$ii)$ otherwise,
for $k\leq p_0-1$, $cfc(L^k(G))=\lceil log_2(p_0-k+2)\rceil$
if there are only one component $K$ of $L^k(G)$
satisfying $cfc(K)=\lceil log_2(p_0-k+2)\rceil$,
and
$cfc(L^k(G))=\lceil log_2(p_0-k+2)\rceil+1$
if there are at least two components $K$ of $L^k(G)$
satisfying $cfc(K)=\lceil log_2(p_0-k+2)\rceil$;
for $k> p_0-1$, $cfc(L^k(G))=2$ always holds.
\end{lem}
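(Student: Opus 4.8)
The plan is to reduce the entire statement to Corollary~\ref{cor2}, which already evaluates $cfc$ of any connected line graph in terms of the length of its longest cut-path and the number of components of its cut-edge subgraph attaining the maximal cfc-value. Since $L^k(G)$ is by definition the line graph of the connected graph $L^{k-1}(G)$, it is itself a connected line graph for every $k\geq1$, so Corollary~\ref{cor2} applies to it; the argument then reduces to deciding which case of Corollary~\ref{cor2} governs $L^k(G)$ and to securing the lower bound $cfc(L^k(G))\geq2$.

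Two preliminary facts drive everything. First, the cut-path lengths obey a simple recursion: writing $p_i$ for the length of a longest cut-path of $L^i(G)$, with $p_i=0$ when $L^i(G)$ is $2$-edge-connected, one has $p_{i+1}=\max\{p_i-1,\,0\}$. This rests on Corollary~\ref{cor1}, on the identity $L(P_j)=P_{j-1}$, and on the fact (drawn from the proof of Theorem~\ref{thm6}) that a cut-edge of a line graph arises exactly from a length-$2$ subpath with a degree-$2$ internal vertex, so that cut-paths of length at least $2$ shrink by one while cut-paths of length $1$ collapse into cut-vertices. As $p_1=p_0$, it follows that $p_k=\max\{p_0-k+1,\,0\}$, whence $p_k\geq2$ precisely for $k\leq p_0-1$ and $p_k\leq1$ precisely for $k\geq p_0$. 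Second, $L^k(G)$ is never complete for $k\geq1$: by Lemma~\ref{lem5} this requires $L^{k-1}(G)$ to be neither a star nor $K_3$. Because $G\in\mathcal{G}$ has a cut-edge it is neither a path nor a cycle, so it carries a vertex of degree at least $3$, forcing a triangle in $L(G)$; triangles persist under the line-graph operator, and a connected graph containing a triangle has at least as many edges as vertices. Since a connected non-path non-star graph on at least four vertices has at least four edges, $|V(L(G))|=|E(G)|\geq4$, and hence $|V(L^j(G))|\geq4$ for all $j\geq1$. Thus for $k\geq2$ the graph $L^{k-1}(G)$ contains a triangle and has more than three vertices, excluding both a star and $K_3$, while for $k=1$ the definition of $\mathcal{G}$ excludes them directly. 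By Proposition~\ref{pro1}, $cfc(L^k(G))\geq2$ for every $k\geq1$.

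Part $ii)$ is then immediate. For $k\leq p_0-1$ we have $p_k=p_0-k+1\geq2$, so applying the third case of Corollary~\ref{cor2} to $L^k(G)$ (whose longest cut-path has length $p_k$, with $p_k+1=p_0-k+2$) yields $\lceil log_2(p_0-k+2)\rceil$ or $\lceil log_2(p_0-k+2)\rceil+1$ according as one or at least two components of $C(L^k(G))$ attain the value $\lceil log_2(p_0-k+2)\rceil$. For $k>p_0-1$ we have $p_k\leq1$, so $L^k(G)$ is $2$-edge-connected or has longest cut-path of length $1$, and the second case of Corollary~\ref{cor2}, together with noncompleteness, gives $cfc(L^k(G))=2$. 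For part $i)$, the subcase $p_0\leq1$ lies wholly in the range $k>p_0-1$ just settled. In the remaining subcase, $h(L(G))=2$ forces the longest cut-path of $L(G)$ to have length $2$ or $3$ (as $cfc$ of a cut-path of length $\ell$ equals $\lceil log_2(\ell+1)\rceil$), and uniqueness means $L(G)$ has exactly one cut-path of length $2$ or $3$ and all other cut-paths of length $1$. Pushing this configuration through the operator---the single long cut-path shrinks by one at each step while the length-$1$ cut-paths disappear---shows that for every $k\geq1$ at most one component of $C(L^k(G))$ attains the maximal cfc-value and that $p_k\leq3$; hence Corollary~\ref{cor2} gives $cfc(L^k(G))=\lceil log_2(p_k+1)\rceil=2$ when $p_k\in\{2,3\}$ and $cfc(L^k(G))=2$ when $p_k\leq1$, so the value is $2$ throughout.

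The main obstacle I anticipate is the bookkeeping of the cut-edge subgraph under iteration. One must confirm that cut-paths of length at least $2$ correspond bijectively, shrinking by one, to the cut-paths of the next line graph, while length-$1$ cut-paths vanish into cut-vertices; this is precisely what legitimizes both the recursion $p_k=\max\{p_0-k+1,0\}$ and the propagation of uniqueness in part $i)$, and it also controls how the number of length-$\ell$ cut-paths---and therefore the count of components attaining the maximal cfc-value required by Corollary~\ref{cor2}---evolves with $k$. Fortunately this correspondence is already largely implicit in the proofs of Theorem~\ref{thm6} and of the monotonicity result preceding this lemma, so the remaining work is mainly to assemble these observations carefully rather than to prove anything essentially new.
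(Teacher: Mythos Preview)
Your approach is correct and is exactly what the paper intends: the paper does not give a proof of this lemma, only remarking that it follows from Corollaries~\ref{cor1} and~\ref{cor2}, and your argument is a careful unpacking of precisely that reduction. Your additional care in verifying that $L^k(G)$ is never complete for $G\in\mathcal{G}$ (hence securing the lower bound $cfc(L^k(G))\geq 2$) and in tracking the cut-path recursion $p_k=\max\{p_0-k+1,0\}$ fills in details the paper leaves entirely to the reader.
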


From Lemmas~\ref{lem6} through \ref{lem10},
we can easily get the smallest nonnegative
integer $k_0$ such that $cfc(L^{k_0}(G))=2$.

\begin{thm}
Let $G$ be a connected graph
and $k_0$ be the smallest nonnegative
integer such that $cfc(L^{k_0}(G))=2$.
Then we have

\ \ $i)$ for $G\in \{K_2,K_3,K_{1,3}\}$, $k_0$ does not exist;

\ $ii)$ for a path of order~$3$, $k_0=0$;
        for a path of order $n\geq 4$, $k_0=n-4$;

$iii)$ for a complete graph of order at least~$4$, $k_0=1$;

\ $iv)$ for a star of order at least~$5$, $k_0=2$;

\ \ $v)$ for a~noncomplete $2$-edge-connected graph, $k_0=0$;

\ $vi)$ for a graph $G\in \mathcal{G}$, $k_0=0$ if $cfc(G)=2$;
$k_0=1$ if $C(L(G))=\emptyset$
or $C(L(G))$ is a linear forest whose each component is of order~$2$
or there is only one
component $K$ of $L(G)$ satisfying $cfc(K)=h(L(G))=2$;
otherwise,
$k_0=p_0-2$ if there is only one path of length $p_0$ in $L(G)$
and there is no path of length $p_0-1$ in $L(G)$ with $p_0\geq 4$,
$k_0=p_0-1$ if there is only one path of length $p_0$ in $L(G)$
and there is a path of length $p_0-1$ in $L(G)$ with $p_0\geq 3$,
$k_0=p_0$ if there are at least two paths of length $p_0$ in $L(G)$
with $p_0\geq 2$.
\end{thm}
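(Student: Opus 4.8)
The plan is to read off $k_0$ from the exact $cfc$-values already computed in Lemmas~\ref{lem6} through~\ref{lem10}, family by family. For each graph $G$ in the statement I would locate the corresponding lemma, write down the sequence $cfc(L^0(G)),cfc(L^1(G)),cfc(L^2(G)),\dots$, and take $k_0$ to be the first index at which the value equals $2$, recording that no such index exists when the sequence never reaches $2$.

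Cases~$i)$--$v)$ are then immediate. For $K_2$ one has $cfc(K_2)=1$ while $L^k(K_2)$ is trivial for $k\ge 1$; for $K_3$ and $K_{1,3}$, Lemma~\ref{lem7} (with $n=3$) and Lemma~\ref{lem8} (with $n=4$) give $cfc(L^k(G))\in\{1,3\}$ throughout, so in all three cases $k_0$ does not exist. For paths I would apply Lemma~\ref{lem6}: for $P_3$ one has $cfc(P_3)=\lceil\log_2 3\rceil=2$, so $k_0=0$; for $P_n$ with $n\ge 4$ one solves $\lceil\log_2(n-k)\rceil=2$, i.e. $n-k\in\{3,4\}$, whose least solution is $k=n-4$, while $n-k>4$ forces $cfc\ge 3$ for smaller $k$, giving $k_0=n-4$. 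Case~$iii)$ follows from Lemma~\ref{lem7} ($cfc(G)=1$, $cfc(L^k(G))=2$ for all $k\ge 1$), case~$iv)$ from Lemma~\ref{lem8} ($cfc(G)=n-1\ge 4$, $cfc(L(G))=1$, $cfc(L^k(G))=2$ for $k\ge 2$), and case~$v)$ from Lemma~\ref{lem9} ($cfc(L^k(G))=2$ for every $k$).

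The substantive case is~$vi)$, where $G\in\mathcal{G}$ and I would invoke Lemma~\ref{lem10}. If $cfc(G)=2$ then $k_0=0$ by definition, so assume $cfc(G)\ge 3$ and work with $k\ge 1$. The key structural fact, from Corollary~\ref{cor1} and $L(P_j)=P_{j-1}$, is that every cut-path of $L^{k}(G)$ is shorter by exactly one than its ancestor in $L(G)$; hence a longest cut-path of $L(G)$, of length $p_0$, has length $p_0-k+1$ in $L^k(G)$, and $h(L^k(G))=\lceil\log_2(p_0-k+2)\rceil$. By Lemma~\ref{lem10}, $cfc(L^k(G))=2$ precisely when either every cut-path of $L^k(G)$ is a single edge (the regime $k\ge p_0$, where Theorem~\ref{thm3} applies), or $h(L^k(G))=2$ and only one component of $C(L^k(G))$ attains this value. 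Since $h(L^k(G))=2$ forces $p_0-k+2\in\{3,4\}$, i.e. $k\in\{p_0-2,p_0-1\}$, while $h\ge 3$ for all smaller $k$, the first candidate level is $k=p_0-2$. The remaining $k_0=1$ subcases are exactly the hypotheses of Lemma~\ref{lem10}$(i)$, where $cfc(L(G))=2$ already while $cfc(G)\ge 3$.

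Everything then turns on counting, at the critical level $k=p_0-2$, how many components of $C(L^{p_0-2}(G))$ realize $cfc=2$: here $p_0-k+2=4$, so \emph{both} the paths $P_4$ (descended from cut-paths of length $p_0$) and the paths $P_3$ (descended from cut-paths of length $p_0-1$) have conflict-free connection number $2$. This is the crux and the main obstacle, as it is the unique place where two different ancestral lengths collapse to the same value of $\lceil\log_2(\cdot)\rceil$. If there is a single cut-path of length $p_0$ and none of length $p_0-1$, then $C(L^{p_0-2}(G))$ has exactly one component with $cfc=2$, so $cfc(L^{p_0-2}(G))=2$ and $k_0=p_0-2$ (requiring $p_0\ge 4$). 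If instead a cut-path of length $p_0-1$ also exists, with the longest one still unique, then level $p_0-2$ carries at least two such components and hence $cfc=3$; one checks that at level $p_0-1$ (where $p_0-k+2=3$, so only descendants $P_3$ of the length-$p_0$ cut-path count, the length-$(p_0-1)$ ones having shrunk to single edges) exactly one component attains $cfc=2$, giving $k_0=p_0-1$ (requiring $p_0\ge 3$). Finally, if at least two cut-paths of length $p_0$ exist, both levels $p_0-2$ and $p_0-1$ carry at least two maximal components and thus $cfc=3$, so the value drops to $2$ only once all cut-paths have become single edges at $k=p_0$, giving $k_0=p_0$ (requiring $p_0\ge 2$).
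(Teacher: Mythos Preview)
Your proposal is correct and follows essentially the same route as the paper's own proof: both dispatch cases~$i)$--$v)$ directly from Lemmas~\ref{lem6}--\ref{lem9}, and for case~$vi)$ both reduce the question to tracking, via Corollary~\ref{cor2}/Lemma~\ref{lem10}, how many cut-path components of $L^k(G)$ attain $cfc=2$, using that $cfc(P_\ell)=2$ exactly for $\ell\in\{3,4\}$. Your write-up is in fact more explicit than the paper's---you spell out the shrinking $p_0\mapsto p_0-k+1$, isolate the critical level $k=p_0-2$ where descendants of both the length-$p_0$ and length-$(p_0-1)$ cut-paths collapse to the same $\lceil\log_2\rceil$ value, and then read off the three subcases---whereas the paper compresses this into a one-paragraph ``Correspondingly, we get our results.'' One small wording slip: ``shorter by exactly one than its ancestor in $L(G)$'' should read ``in $L^{k-1}(G)$'' (your subsequent formula $p_0-k+1$ makes clear you have the right picture).
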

\begin{proof} Obviously, we can get $i)$ through $v)$
from Lemmas~\ref{lem6}~through~\ref{lem9}.

For $vi)$, if $cfc(G)=2$, then we have $k_0=0$.
If $C(L(G))=\emptyset$, then we have $L(G)$ is~$2$-edge-connected,
and hence, $cfc(L(G))=2$ by Theorem~\ref{thm4};
if $C(L(G))$ is a linear forest whose each component is of order~$2$,
then from Theorem~\ref{thm3} it follows that $cfc(L(G))=2$;
if there is only one
component $K$ of $L(G)$ satisfying $cfc(K)=h(L(G))=2$,
then it follows from Corollary~\ref{cor2} that $cfc(L(G))=2$.
Thus, in the above three cases, we obtain $k_0=1$.
In the following, we consider the case that
both $cfc(G)\geq3$ and $cfc(L(G))\geq3$.
First, we give a fact that
the largest integer $\ell$ such that $cfc(P_\ell)=2$
is~$4$.
Let $G_0$ be a connected graph,
then from Corollary~\ref{cor2}, we have $cfc(G_0)=3$
if there is a component $K$ satisfying $cfc(K)=h(G_0)=3$
or there are at least two components
$K$ satisfying $cfc(K)=h(G_0)=2$.
However, $cfc(G_0)=2$
if there is a path of length~$3$ and there is no
path of length~$2$ in $G_0$,
or if there is a path of length~$2$ and there is a
path of length~$1$ in $G_0$,
or if there are at least two components each of which
is of order~$2$ in $G_0$.
Correspondingly, we get our results.
\end{proof}

\end{document}